\declaretheoremstyle[headfont=\normalfont]{normalhead}
\newtheoremstyle{mydef}
{\topsep}{\topsep}%
{}{}%
{\itshape}{}
{\newline}
{
  \rule{\textwidth}{0.0pt}\\*
  \thmname{#1}~\thmnumber{#2}\thmnote{\-\ #3}.\\*[-1.5ex]
  \rule{\textwidth}{0.0pt}}
\newtheorem{theorem}{Theorem}
\newtheorem{remark}{Remark}
\newtheorem{proposition}{Proposition}
\newtheorem{lemma}{Lemma}
\newtheorem{observation}{Observation}
\newtheorem{definition}{Definition}
\newtheorem{example}{Example}
\newcommand{\dimb}{\text{dim}_B}
\newcommand{\R}{\mathbb{R}}
\newcommand{\N}{\mathbb{N}}
\numberwithin{equation}{section}
\author{Vlatko Crnković}
\affil{University of Zagreb, Faculty of Electrical Engineering and Computing, Department of Applied Mathematics, Unska 3, 10000 Zagreb, Croatia}
\affil{Hasselt University, Campus Diepenbeek, Agoralaan Gebouw D, 3590 Diepenbeek, Belgium}
\date{}
\title{Fractal Analysis of Pseudo Foci in Piecewise Analytic Systems}
\begin{document}

\maketitle
\begin{abstract}
\hskip -.2in
\noindent

It is well known that the Minkowski dimension of spiral trajectories near a non-degenerate focus in analytic (smooth) systems is in one-to-one correspondence with the cyclicity of the focus in generic unfoldings. We give a complete fractal treatment, in terms of the Minkowski dimension and (non-)degeneracy, of spiral trajectories near \emph{pseudo foci} of piecewise analytic systems. We hope these results will prove useful in the study of bifurcations of such pseudo foci through inherent geometry of associated spiral orbits.

\end{abstract}

\tableofcontents

\section{Introduction}\label{sec:intro}

\subsection{Fractal analysis of non-degenerate foci in analytic systems}

    The Minkowski dimension is a fractal dimension that quantifies how the Lebesgue measure of the $\delta$-neighbourhood of a bounded set in $\mathbb{R}^N$ behaves as $\delta \to 0$.
    There are several equivalent ways of calculating the value of the Minkowski dimension of a set, but for our purposes we will use the following one
    
    \begin{definition}[Minkowski dimension, \cite{tricot95}]
        For a bounded set $G\subset \mathbb{R}^N$, we call the set
        \[ G_\delta \colon = \{ p \in \mathbb{R}^N \colon \text{dist}(p, G) < \delta\},\quad \delta > 0, \]
        be the $\delta$-neighbourhood of $G$ and denote its Lebesgue measure by $|G_\delta|$.\\
        If the limit 
        \begin{equation}
            \lim_{\delta \to 0+} \left[ N - \frac{\ln |G_\delta|}{\ln \delta} \right]
        \end{equation}
        exists, we call its value the Minkowski dimension of $G$, and denote it by $\dimb G$. Moreover, if 
        \begin{equation}
            0 < \liminf_{\delta \to 0+} \frac{|G_\delta|}{\delta^{N-\dimb G}} \leq \limsup_{\delta \to 0+} \frac{|G_\delta|}{\delta^{N-\dimb G}} < + \infty,
        \end{equation}
        we say that $G$ is Minkowski non-degenerate.
    \end{definition}
    The Minkowski dimension is preserved under bi-Lipschitz transformations, even when the image and the original are not in the same ambient space. More precisely, a transformation $\Psi: A \subset \mathbb{R}^N \to \mathbb{R}^K$ is called bi-Lipschitz if there exit positive constants $m$ and $M$ such that for any $x,y \in A$ 
   \begin{equation}
        m || \Psi(x) - \Psi(y) || \leq ||x-y|| \leq M || \Psi(x) - \Psi(y)||,
    \end{equation}
    and for such $\Psi$ we have that
    \begin{equation}
        \dimb A = \dimb \Psi (A).
    \end{equation}
    The dimension is also monotone in the sense that for $G\subseteq H \implies \dimb G \leq \dimb H$ when both are defined.
    Additionally, it is finitely stable, meaning that $\dimb {G \cup H} = \max \{ \dimb G, \dimb H\}$.
    For more on these and other properties of the Minkowski dimension we refer the reader to \cite{falconer90} and \cite{tricot95}.

    It is already well known that the Minkowski dimension of spirals winding around weak foci of planar analytic vector fields yields information on the cyclicity of those limit periodic sets.
    The following is the first result of this type.
    \begin{theorem}[Žubrinić, Županović, 2005, \cite{zz05}]\label{tm:focus_dim}
        Let $\Gamma$ be a part of a trajectory of the system
        \[ \begin{cases}
        \dot{r} = r(r^{2l} + \sum_{i=0}^{l-1} a_ir^{2i})\\
        \dot{\theta} = 1 \end{cases} \]
        near the origin.
        Then
        \begin{enumerate}
            \item[(a)] if $a_0 \neq 0$, then $\dimb \Gamma = 1$.
            \item[(b)] if $a_0 = a_1 = ... = a_{k-1} = 0, a_k \neq 0$, then $\dimb \Gamma = \frac{4k}{2k+1}$.
        \end{enumerate}
    \end{theorem}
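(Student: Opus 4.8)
The plan is to reduce the planar system to a scalar equation for the radius, extract the sharp asymptotics of the radius along the spiralling trajectory, and then read off $\dimb\Gamma$ from a model--spiral area estimate.

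\textbf{Step 1 (reduction to a radial equation).} Since $\dot\theta\equiv1$, along any trajectory the radius is a function $r=r(\theta)$ solving
\[
\frac{dr}{d\theta}=r\Big(r^{2l}+\sum_{i=0}^{l-1}a_i r^{2i}\Big).
\]
Reversing time if necessary, we may assume that the origin is attracting along the branch carrying $\Gamma$, so that $r(\theta)$ is eventually positive, strictly decreasing, and $r(\theta)\to0$ as $\theta\to+\infty$; hence the closure of the part of the trajectory near the origin is the spiral $S=\{r(\theta)(\cos\theta,\sin\theta):\theta\ge\theta_0\}\cup\{0\}$. Any bounded arc of the trajectory staying away from the origin is a rectifiable curve, of Minkowski dimension $1$; by finite stability of $\dimb$ it contributes nothing extra, so it suffices to compute $\dimb S$.

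\textbf{Step 2 (asymptotics of the radius).} In case (a) the bracket equals $a_0(1+o(1))$ as $r\to0$, so $\frac{d}{d\theta}\ln r=a_0+o(1)$, and a comparison (Gronwall-type) argument gives $r(\theta)\le Ce^{-c\theta}$ for some $c>0$ and all large $\theta$. Then $\int_{\theta_0}^{\infty}\sqrt{r^2+(r')^2}\,d\theta<\infty$, so $S$ is rectifiable and $\dimb S=1$, which is (a). In case (b), let $b=a_k$ if $k<l$ and $b=1$ if $k=l$ (the all--zero case); then the bracket equals $b\,r^{2k}(1+o(1))$, hence $\frac{dr}{d\theta}=b\,r^{2k+1}(1+o(1))$ and therefore $\frac{d}{d\theta}(r^{-2k})=-2kb(1+o(1))$, with $b<0$ because $r$ is decreasing. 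Integrating,
\[
r(\theta)=\bigl(2k|b|\,\theta\bigr)^{-1/(2k)}\bigl(1+o(1)\bigr),\qquad\theta\to+\infty,
\]
together with a matching estimate for $r'(\theta)$. Thus along $S$ the radius decays like $\theta^{-\alpha}$ with $\alpha:=\tfrac1{2k}\in(0,1)$.

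\textbf{Step 3 (radial decay $\Rightarrow$ Minkowski dimension).} It remains to establish the model statement: a spiral $r=f(\theta)$ with $f$ monotone and $f(\theta)\sim c\,\theta^{-\alpha}$, $c>0$, $0<\alpha<1$, satisfies $\dimb=\frac{2}{1+\alpha}$ and is Minkowski nondegenerate. Fix $\delta>0$ small and let $\varepsilon=\varepsilon(\delta)$ be the radius at which the distance between consecutive coils — which at radius $r$ is $\asymp|f'|\asymp r^{1+1/\alpha}$ — equals $2\delta$, i.e.\ $\varepsilon\asymp\delta^{\alpha/(1+\alpha)}$. For $r<\varepsilon$ the coils are $\delta$-close, so $S_\delta\cap\{r<\varepsilon+\delta\}$ is, up to bounded error, a disk of radius $\asymp\varepsilon$ and area $\asymp\varepsilon^2$; for $r>\varepsilon$ the coils are $\delta$-separated and $S\cap\{r>\varepsilon\}$ is a curve whose length is $\asymp\Theta^{1-\alpha}$, where $\Theta\asymp\varepsilon^{-1/\alpha}$ is the angle at which $r=\varepsilon$, so its $\delta$-neighbourhood has area $\asymp\delta\,\Theta^{1-\alpha}$. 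With $\varepsilon\asymp\delta^{\alpha/(1+\alpha)}$ both contributions are of order $\delta^{2\alpha/(1+\alpha)}$, hence $|S_\delta|\asymp\delta^{2\alpha/(1+\alpha)}$ and
\[
\dimb S=2-\frac{2\alpha}{1+\alpha}=\frac{2}{1+\alpha}.
\]
Putting $\alpha=\tfrac1{2k}$ gives $\dimb\Gamma=\dimb S=\dfrac{4k}{2k+1}$, which is (b). Alternatively, once the asymptotics of $r$ and $r'$ from Step 2 are pinned down, one can produce an explicit bi-Lipschitz homeomorphism from $S$ onto the exact power spiral $r=\theta^{-\alpha}$ and invoke the bi-Lipschitz invariance of $\dimb$ recalled in the introduction.

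\textbf{Where the difficulty lies.} The delicate points are: in Step 2, making the asymptotics of $r(\theta)$ and $r'(\theta)$ rigorous and uniform despite the subleading terms $r^{2l}$ and $a_i r^{2i}$ with $i>k$, which needs a careful comparison/bootstrap; and in Step 3, controlling the transition annulus $r\approx\varepsilon(\delta)$, where neighbouring coils are neither fully merged nor cleanly separated, sharply enough to obtain the two-sided bound on $|S_\delta|$ — and hence Minkowski nondegeneracy — rather than merely the value of the dimension.
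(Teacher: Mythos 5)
Your proposal is correct, and it is worth noting that this paper does not prove the statement at all: it is quoted verbatim from Žubrinić--Županović \cite{zz05}, and your route (reduce to the radial equation, extract $r(\theta)\simeq\theta^{-1/(2k)}$, then compute the dimension of a power-type spiral by splitting its $\delta$-neighbourhood at the radius where consecutive coils merge) is essentially the argument of that cited source, and it is the continuous-spiral analogue of the nucleus--tail method the present paper uses elsewhere. The check of the exponent is right: $\alpha=\tfrac1{2k}$ gives $\frac{2}{1+\alpha}=\frac{4k}{2k+1}$, and case (a) via rectifiability is fine. Two remarks. First, the paper's own machinery would yield the same theorem by a slightly different decomposition: the flow-sector theorem reduces the spiral to a union of concentric arcs indexed by the orbit $x_n$ of the first return map, whose displacement for this model is $\simeq x^{2k+1}$, so Theorem~\ref{tm:neveda} gives $x_n\simeq n^{-1/(2k)}$ and Proposition~\ref{prop:dim_focus} gives $2-\frac{2}{2k+1}=\frac{4k}{2k+1}$; your coil-gap computation and that discrete decomposition are interchangeable. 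Second, the ``transition annulus'' you flag as delicate is less of an obstacle than you suggest: for the upper bound one may bound the tail's neighbourhood by $\delta\cdot(\text{length})+O(\delta^2)$ with no separation hypothesis, and for the matching lower bound the nucleus alone suffices, since below the merging radius every point between consecutive coils lies within $\delta$ of the spiral, so the neighbourhood contains a disk of radius $\simeq\varepsilon(\delta)$; this already pins down both the dimension and the non-degeneracy, with the genuinely careful step being the uniform two-sided asymptotics of $r$ and $r'$ from the ODE, exactly as you identify in Step 2.
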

    
    The conclusions of this theorem, the one-to-one correspondence between the order of the focus and the Minkowski dimension of spiral trajectories near it, hold for all non-degenerate foci in analytic (smooth) systems. This is evident from later work of the authors of the theorem, where they have developed the \emph{flow-sector} theorem.
    \begin{theorem}[The flow-sector theorem, \cite{zz08}]
        Consider the system 
                \begin{equation}\label{eq2.5}
                    \begin{cases}
                        \dot{x} = -y + m(x,y),\\
                        \dot{y} = x + n(x,y)
                    \end{cases}
                \end{equation}
                where $m(x,y), n(x,y) = O(x^2+y^2)$ as $(x,y)\to (0,0)$ are $C^1$ functions. Let $U_0\subset \mathbb{R}^2$ be an open sector with the vertex at the origin, such that its opening angle is in $(0, 2\pi)$, and the boundary of $U_0$ consists of a part of a trajectory of \eqref{eq2.5} and of intervals on two rays emanating form the origin. If the diameter of $U_0$ is sufficiently small, then system \eqref{eq2.5} is lipeomorphically equivalent to the system
                \begin{equation}\label{eq2.6}
                    \begin{cases}
                        \dot{r} = 0,\\
                        \dot{\theta} = 1
                    \end{cases}
                \end{equation}
                defined on the sector $V_0 = \left\{ (r,\theta) \colon 0<r<1,\, 0<\theta < \frac{\pi}{2}\right\}$ in polar coordinates $(r,\theta)$.
    \end{theorem}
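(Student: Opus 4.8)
Here is the plan.

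\smallskip
\noindent\textbf{Overview and reduction to polar coordinates.}
The plan is to pass to polar coordinates, reparametrise orbits by the polar angle $\theta$, and then ``straighten'' the orbit foliation of $U_0$ by flowing each orbit back to one of the two bounding rays; a linear rescaling of the resulting map will be the desired lipeomorphism onto $V_0$. First I would write $x=r\cos\theta$, $y=r\sin\theta$, so that \eqref{eq2.5} becomes $\dot r=R(r,\theta)$, $\dot\theta=1+\Theta(r,\theta)$ with $R=(xm+yn)/r=O(r^2)$ and $\Theta=(xn-ym)/r^{2}=O(r)$ as $r\to0$. A direct computation, using $m,n=O(x^{2}+y^{2})$ and their $C^{1}$ regularity (so $\nabla m,\nabla n$ are continuous and vanish at the origin), shows that $R$ and $\Theta$ extend to $C^{1}$ functions on a neighbourhood of the segment $\{r=0\}$ in the $(r,\theta)$-cylinder, with all first partials tending to $0$ as $r\to0$. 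Hence, once the diameter of $U_0$ is small enough, $\dot\theta\ge\tfrac12>0$ on $U_0$, and orbits in $U_0$ may be reparametrised by $\theta$: they are the graphs of solutions of $\frac{dr}{d\theta}=f(r,\theta):=\frac{R(r,\theta)}{1+\Theta(r,\theta)}$, where $f$ is $C^{1}$, $f=O(r^{2})$, and $\partial_r f\to0$ as $r\to0$; shrinking $U_0$ further I may assume $|\partial_r f|\le\varepsilon$ and $|f(r,\theta)|\le Cr^{2}$ on $U_0$.

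\smallskip
\noindent\textbf{The straightening map.}
In polar coordinates $U_0=\{(r,\theta):\alpha<\theta<\alpha+\omega,\ 0<r<r_\partial(\theta)\}$, where $\omega\in(0,2\pi)$ is the opening angle and $\theta\mapsto r_\partial(\theta)$ is the solution of $dr/d\theta=f$ tracing the boundary trajectory, with $r_\partial(\alpha)=\rho_0>0$; for small diameter $r_\partial$ stays between $\rho_0/2$ and $2\rho_0$ over the interval of length $\omega<2\pi$. Since $r\equiv0$ and $r=r_\partial(\cdot)$ are orbits and orbits cannot cross, every point of $U_0$ lies on a unique orbit which, followed with decreasing $\theta$, meets the ray $\theta=\alpha$ at a well-defined radius $\rho(r,\theta)\in(0,\rho_0)$. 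I then set
\[
\Phi(r,\theta):=\left(\frac{\rho(r,\theta)}{\rho_0},\ \frac{\pi}{2\omega}(\theta-\alpha)\right).
\]
By construction $\Phi:U_0\to V_0$ is a bijection carrying each orbit $\{\rho=\mathrm{const}\}$ of \eqref{eq2.5} onto a horizontal segment $\{\tilde r=\mathrm{const}\}$, i.e.\ onto an orbit of \eqref{eq2.6}, and it preserves orientation since $\theta$ increases along orbits of both systems. It remains to prove that $\Phi$ is bi-Lipschitz.

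\smallskip
\noindent\textbf{The bi-Lipschitz estimate.}
Let $\Psi_\theta$ be the flow of $dr/d\theta=f$ from angle $\alpha$ to angle $\theta$, so that $\rho(\cdot,\theta)=\Psi_\theta^{-1}$. The variational equation gives $\partial_{r_0}\Psi_\theta(r_0)=\exp\!\big(\int_\alpha^\theta\partial_r f\big)\in[e^{-2\pi\varepsilon},e^{2\pi\varepsilon}]$, hence $\partial_r\rho\in[e^{-2\pi\varepsilon},e^{2\pi\varepsilon}]$; differentiating $\Psi_\theta(\rho(r,\theta))=r$ in $\theta$ and using $\partial_\theta\Psi_\theta(r_0)=f(\Psi_\theta(r_0),\theta)$ yields $\partial_\theta\rho=-f(r,\theta)/\partial_{r_0}\Psi_\theta(\rho)$, so $|\partial_\theta\rho|\le 2Cr^{2}$. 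In Cartesian coordinates $\Phi=P\circ\psi\circ P^{-1}$ with $P(r,\theta)=r(\cos\theta,\sin\theta)$ and $\psi(r,\theta)=(\rho/\rho_0,\tfrac{\pi}{2\omega}(\theta-\alpha))$; the $1/r$ factors produced by $DP^{-1}$ are cancelled precisely because $\partial_\theta\rho/r=O(r)$ and $\rho/r$ is bounded above and below, so a straightforward computation of the product $DP\cdot D\psi\cdot DP^{-1}$ shows that $D\Phi$ and $(D\Phi)^{-1}$ are uniformly bounded on $U_0$ (with bounds depending on $\rho_0,\omega,C,\varepsilon$). Since a planar sector at the origin of opening angle $<2\pi$ is quasiconvex (with a constant depending on that fixed angle) and $V_0$ is convex, these uniform bounds on the differentials upgrade to bi-Lipschitz estimates for $\Phi$ and $\Phi^{-1}$, which completes the argument.

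\smallskip
\noindent\textbf{Main obstacle.}
I expect the heart of the matter to be the bi-Lipschitz control \emph{near the vertex}: in polar coordinates the angular direction is compressed by a factor $\sim r$ near the origin, so a crude straightening (which only yields $\partial_\theta\rho=O(r)$) fails to be bi-Lipschitz there. It is exactly the hypothesis $m,n=O(x^{2}+y^{2})$ that forces $f=O(r^{2})$ and hence the quadratic bound $\partial_\theta\rho=O(r^{2})$, which is what makes the Cartesian differential of $\Phi$ bounded up to the vertex. Pinning down this cancellation carefully — together with the quasiconvexity bookkeeping that lets the opening angle range over all of $(0,2\pi)$ at the price of worse bi-Lipschitz constants — is where the real work lies; the initial reduction to the scalar $\theta$-parametrised equation and the definition of $\Phi$ are comparatively routine.
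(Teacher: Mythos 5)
This theorem is quoted in the paper from \cite{zz08} and no proof is given here, so there is no internal proof to compare against; judged on its own, your plan is sound and follows the standard straightening strategy of the original flow-sector argument: pass to polar coordinates, reparametrise by $\theta$, map each point along its orbit back to a reference ray, rescale, and verify bi-Lipschitzness of the resulting map near the vertex, where the hypothesis $m,n=O(x^2+y^2)$ is exactly what makes $dr/d\theta=O(r^2)$ and hence keeps the Cartesian differential bounded and uniformly invertible. One small inaccuracy: the claim that $\Theta=(xn-ym)/r^2$ extends to a $C^1$ function up to $\{r=0\}$ is stronger than what $C^1$ regularity together with $m,n=O(x^2+y^2)$ guarantees (the gradients of $m,n$ need not be $O(r)$); however, this does not affect the argument, since all you actually use is that $f=R/(1+\Theta)$ is $C^1$ on $\{r>0\}$, $|f|\le Cr^2$, and $\partial_r f\to 0$ uniformly as $r\to 0$, and these follow from $\nabla m,\nabla n$ being continuous and vanishing at the origin. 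With that adjustment, and the quasiconvexity bookkeeping you already indicate for opening angles up to $2\pi$, the proposal is a correct proof outline.
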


    \begin{figure}[h!]
        \centering
        \includegraphics[width=0.5\linewidth]{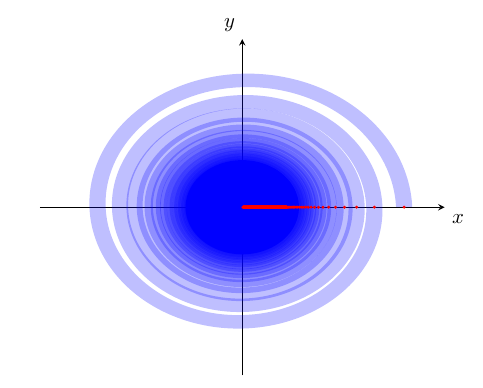}
        \caption{The $\delta$-neighbourhood of a spiral trajectory in \autoref{tm:focus_dim}, \cite{thesis_Crnkovic}}
        \label{fig:focus_saussage}
    \end{figure}
    
    The flow-sector theorem, together with the finite stability property of the Minkowski dimension, allows one to reduce the analysis of spiral trajectories to the analysis of orbits of the first return map associated to the focus. Finally, the following result provides all relevant information on the orbits of analytic first return maps.
    \begin{theorem}[Fractal analysis of line diffeomorphisms, \cite{ezz07}]\label{tm:neveda}
        Let $\alpha > 1$ and let $f\colon (0, r) \to (0, +\infty)$ be a monotonically nondecreasing function such that $f(x)\simeq x^\alpha$ as $x\to 0$, and $f(x) < x$ for all $x\in (0,r)$. Consider the sequence $S(x_0)\vcentcolon = (x_n)_{n\geq 0}$ defined by
        \begin{equation*}
            x_{n+1} = x_n - f(x_n),\quad x_0\in (0,r).
        \end{equation*}
        Then
        \begin{equation*}
            x_n\simeq n^{-\frac{1}{\alpha-1}},\quad \text{as } n\to \infty.
        \end{equation*}
        Furthermore,
        \begin{equation*}
            \dimb\, S(x_0) = \frac{1}{\frac{1}{\alpha-1}+1} = 1 - \frac{1}{\alpha}
        \end{equation*}
        and the set $S(x_0)$ is Minkowski non-degenerate.
    \end{theorem}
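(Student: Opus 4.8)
The plan is to separate the statement into two parts: first establish the sharp convergence rate $x_n\simeq n^{-1/(\alpha-1)}$, and then deduce the Minkowski dimension and non-degeneracy of the orbit $S(x_0)$ by a direct two-sided estimate of the length of its $\delta$-neighbourhood. I begin with a reduction. Since $f>0$ on $(0,r)$, the sequence $(x_n)$ is strictly decreasing and bounded below by $0$, hence converges to some $L\geq 0$; as $\sum_n (x_n-x_{n+1})=x_0-L$ is a convergent series, $f(x_n)=x_n-x_{n+1}\to 0$. If $L>0$, then by monotonicity $f(x_n)\geq f(L')>0$ for any fixed $L'\in(0,L)$, a contradiction; hence $x_n\to 0$. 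Fix $0<c\leq C$ and $\rho\in(0,r)$ with $cx^\alpha\leq f(x)\leq Cx^\alpha$ on $(0,\rho]$. Only finitely many $x_n$ lie outside $(0,\rho]$, and a finite set has Minkowski dimension $0$, so by finite stability of $\dimb$ it suffices to analyse a tail of the sequence; equivalently, after relabelling, I may assume $x_0\in(0,\rho]$.

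For the convergence rate I would pass to $u_n=x_n^{1-\alpha}$, which tends to $+\infty$. Writing $x_{n+1}=x_n\bigl(1-f(x_n)/x_n\bigr)$ with $f(x_n)/x_n\simeq x_n^{\alpha-1}\to 0$ and Taylor-expanding $t\mapsto(1-t)^{1-\alpha}=1+(\alpha-1)t+O(t^2)$ gives
\[
u_{n+1}-u_n=x_n^{1-\alpha}\Bigl[(\alpha-1)\frac{f(x_n)}{x_n}+O\bigl((f(x_n)/x_n)^2\bigr)\Bigr]=(\alpha-1)\,x_n^{-\alpha}f(x_n)\,\bigl(1+o(1)\bigr).
\]
Since $c\leq x_n^{-\alpha}f(x_n)\leq C$, the increments $u_{n+1}-u_n$ are, for all large $n$, bounded between two positive constants; summing them shows $u_n\simeq n$, i.e. $x_n\simeq n^{-1/(\alpha-1)}$. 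In particular the gaps are $d_n\vcentcolon= x_n-x_{n+1}=f(x_n)\simeq x_n^\alpha\simeq n^{-\alpha/(\alpha-1)}$, and they are decreasing since $f$ is nondecreasing and $x_n$ decreasing.

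For the fractal part, fix small $\delta>0$ and let $n_\delta$ be the largest index with $d_{n_\delta}\geq 2\delta$; by the gap asymptotics, $n_\delta\simeq\delta^{-(\alpha-1)/\alpha}$. For $n\leq n_\delta$ the intervals $(x_n-\delta,x_n+\delta)$ are pairwise disjoint, so they contribute length $\simeq n_\delta\cdot\delta\simeq\delta^{1/\alpha}$ to $|S(x_0)_\delta|$; for $n>n_\delta$ consecutive such intervals overlap, so together with the ball about the limit point $0$ they fill, up to an $O(\delta)$ error, the interval $[0,x_{n_\delta}]$ of length $x_{n_\delta}\simeq n_\delta^{-1/(\alpha-1)}\simeq\delta^{1/\alpha}$. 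Both contributions and the $O(\delta)$ corrections are of order $\delta^{1/\alpha}$, giving the two-sided bound $|S(x_0)_\delta|\simeq\delta^{1/\alpha}$. Hence
\[
\dimb S(x_0)=1-\lim_{\delta\to0+}\frac{\ln|S(x_0)_\delta|}{\ln\delta}=1-\frac1\alpha=\frac{1}{\frac{1}{\alpha-1}+1},
\]
and because the estimate on $|S(x_0)_\delta|$ is genuinely two-sided, $0<\liminf_{\delta\to0+}|S(x_0)_\delta|\delta^{-1/\alpha}\leq\limsup_{\delta\to0+}|S(x_0)_\delta|\delta^{-1/\alpha}<\infty$, i.e. $S(x_0)$ is Minkowski non-degenerate.

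The main obstacle is that $f$ is controlled only by the two-sided comparison $f(x)\simeq x^\alpha$ and by monotonicity — it need not be differentiable or asymptotically homogeneous — so every asymptotic equivalence above must in fact be carried out with explicit constants. Concretely, the real work lies in (i) turning the increment estimate for $u_n$ into honest uniform linear upper and lower bounds valid for the unknown $f$, and (ii) the bookkeeping in the $\delta$-neighbourhood count that splits $|S(x_0)_\delta|$ into the sparse head and the dense tail and shows that both are of order $\delta^{1/\alpha}$; the monotonicity hypothesis on $f$ is exactly what makes the gaps $d_n$ monotone and lets one compare $f(x_n)$ with $f$ evaluated on the model sequence $n^{-1/(\alpha-1)}$.
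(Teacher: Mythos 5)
Your argument is correct, and it is essentially the standard route: the paper itself does not prove this statement (it is imported from \cite{ezz07} as a black box), but your two ingredients are exactly the ones the paper and its source rely on. The substitution $u_n=x_n^{1-\alpha}$ with the uniform two-sided bound $c\leq x_n^{-\alpha}f(x_n)\leq C$ gives increments of $u_n$ pinched between positive constants, hence $x_n\simeq n^{-1/(\alpha-1)}$, and your $\delta$-neighbourhood count is precisely the nucleus--tail method the paper demonstrates in \autoref{exa:nuc_tail}, with monotonicity of $f$ guaranteeing the gaps $d_n=f(x_n)$ are non-increasing so that $n_\delta$ is well defined and the head intervals are pairwise disjoint and disjoint from the nucleus $[0,x_{n_\delta+1}+\delta)$. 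Two small points worth making explicit if you write this up: the two-sided estimate $|S(x_0)_\delta|\simeq\delta^{1/\alpha}$ is what guarantees that the limit in the paper's definition of $\dimb$ actually exists (not just upper and lower dimensions), and when you discard the finitely many points outside $(0,\rho]$ you should note that they contribute only $O(\delta)=o(\delta^{1/\alpha})$ to $|S(x_0)_\delta|$, so that Minkowski non-degeneracy (not only the dimension value) is unaffected by the reduction to the tail.
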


    \subsection{Pseudo foci in piecewise analytic systems}

    In this paper we consider the following model for piecewise analytic systems. Let $V^\pm = M^\pm \partial_x + N^\pm \partial_y$ be analytic vector fields in an open neighbourhood of the origin in $\mathbb{R}^2$. If $N^+(x,0)N^-(x,0)>0$ for $x\neq 0$, one naturally defines the flow of the piecewise analytic vector field $V = \mathds{1}_{\{y\geq0\}}V^+ + \mathds{1}_{\{y\leq0\}}V^-$ to agree with the flow of $V^+$ in the upper half plane $\{y>0\}$, with the flow of $V^-$ in the lower half plane $\{y<0\}$, and to switch between them as it crosses the $x-$axis. Depending of the local behaviour of $V^+$ and $V^-$ at the origin, this can result in the origin being a focus-like fixed point of the flow of $V$. We study the following three types of such \emph{pseudo foci}, as defined in \cite{cgp01}. For the sake of simplicity, we will assume that the resulting flow follows a counter-clockwise direction around the origin, which is equivalent to $\mathrm{sgn\ } N^+(x,0) = \mathrm{sgn\ } N^-(x,0) = \mathrm{sgn\ } x$.

    \subsubsection*{FF type pseudo foci}

        If both $V^+$ and $V^-$ have a non-degenerate focus at the origin, we say that the origin is a pseudo focus of $V$ of the FF (focus-focus) type.

        \begin{figure}[h!]
            \centering
            \includegraphics[width=0.5\linewidth]{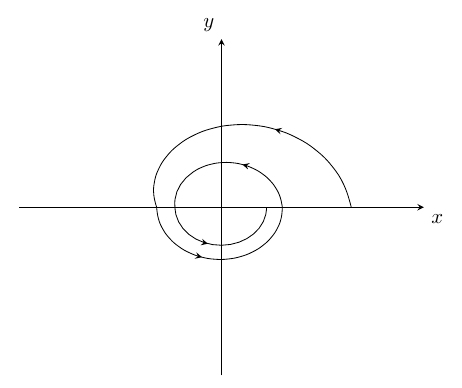}
            \caption{A pseudo focus of the FF type, \cite{thesis_Crnkovic}}
        \end{figure}

    \subsubsection*{PP type pseudo foci}

        If both $V^+$ and $V^-$ have a second-order contact with the line $\{y = 0\}$ at the origin, which is equivalent to \begin{equation}\label{eq:par1}
            N^\pm(0,0) = 0,
        \end{equation}
        and
        \begin{equation}\label{eq:par2}
            M^\pm(0,0)\cdot\partial_x N^\pm(0,0) > 0,
        \end{equation}
        we say that the origin is a pseudo focus of $V$ of the PP (parabolic-parabolic) type.

        \begin{figure}[h!]
            \centering
            \includegraphics[width=0.5\linewidth]{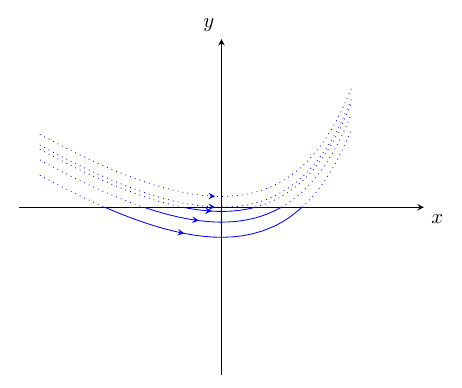}
            \caption{Parabolic contact with $\{y = 0\}$, \cite{thesis_Crnkovic}}
            \label{fig:placeholder}
        \end{figure}

        \begin{figure}[h!]
            \centering
            \includegraphics[width=0.5\linewidth]{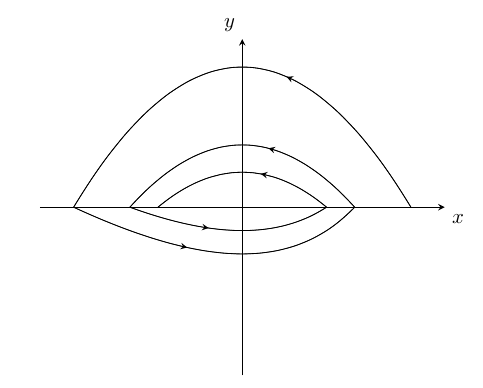}
            \caption{A pseudo focus of the PP type, \cite{thesis_Crnkovic}}
        \end{figure}

    \subsubsection*{FP/PF type pseudo foci}

        If $V^+$ (resp. $V^-$) has a non-degenerate focus at the origin, and $V^-$ (resp. $V^+$) has a second order contact with $\{y = 0\}$ at the origin, we say that the origin is a pseudo focus of $V$ of the FP (resp. PF) type. We will refer to both of these types of pseudo foci as pseudo foci of \emph{mixed} type.

        \begin{figure}[h!]
            \centering
            \includegraphics[width=0.5\linewidth]{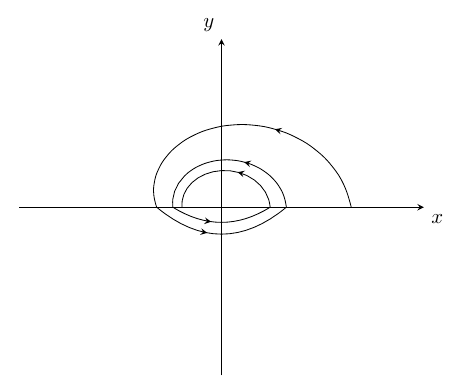}
            \caption{A pseudo focus of FP type, \cite{thesis_Crnkovic}}
        \end{figure}

    \subsubsection{Cyclicity of pseudo-foci}

        In their pivotal work, \cite{cgp01}, the authors have found ways to calculate some \emph{Lyapunov quantities} (coefficients in the asymptotic expansion of the first return map) for pseudo-foci of such systems. Similarly as in the the analytic (smooth) case, these quantities carry information on the cyclicity of (pseudo-)foci. The exploration of this connection was continued through the work of several researchers over the last twenty years, for instance \cite{lh12, crz15} and many more. Our fractal methods will allow us to find the correspondence between the so-called \emph{order} of the pseudo-focus, and the fractal properties of nearby orbits, as was done already in analytic (smooth) case, thus showing that fractal properties are a suitable proxy for relevant information on cyclicity first return maps carry.

    \subsection{Main results of the paper}

    \subsubsection{Fractal analysis of pseudo foci}
        
        As in the case of foci in analytic systems, first return maps associated to these pseudo foci are also analytic (see \autoref{prop:para_trans_char}). If the first return map is of the form $P(x) = x + \alpha_k x^k + o(x^k)$, $k\in \mathbb{N}$, $\alpha_k\neq 0$, we say that the pseudo focus is of \emph{order} $k$. The main results of this paper give a complete characterization of the order of the pseudo focus in terms of the fractal properties (Minkowski dimension and (non-)degeneracy) of the associated spiral trajectories.
    
        \begin{theorem}[Fractal properties of pseudo foci of PP type, \cite{thesis_Crnkovic}]\label{thm:PPfrac}
            Consider a PP type pseudo focus of order $k$ of a piecewise analytic vector field. Then $k$ is necessarily even and the Minkowski dimension of any spiral trajectory near the focus is $2-\frac{3}{k+1}$. The spiral in question is Minkowski non-degenerate if and only if $k\neq 2$.
        \end{theorem}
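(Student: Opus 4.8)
\emph{Strategy.} The plan is to run the same three–step scheme as for an analytic focus — reduce the spiral to the orbit of a one–dimensional return map (as the flow–sector theorem does for \autoref{tm:focus_dim}), pin that orbit down with \autoref{tm:neveda}, and read off $\dimb$ — but with the model orbit now built from nested \emph{parabolic} arcs rather than circular ones, which is exactly what turns the analytic exponent into $2-\tfrac{3}{k+1}$. Concretely, let $\Gamma$ be a spiral accumulating at the origin and let $q_0>q_1>q_2>\cdots\to 0$ be its consecutive intersections with the positive $x$–semiaxis, so $q_{n+1}=P(q_n)$ with $P$ the first return map. By \autoref{prop:para_trans_char} the map $P$ is analytic; write $P(x)=x+\alpha_kx^{k}+o(x^{k})$, $\alpha_k\neq 0$, and assume $\alpha_k<0$ (attracting case; the repelling one follows by reversing time). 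The parity assertion ``$k$ even'' will come from the structure of the two half–transition maps supplied by \autoref{prop:para_trans_char}: $P$ is the composition of the passage across the quadratic tangency in $\{y>0\}$ with the one in $\{y<0\}$, and this composition is constrained so that its first non–identity coefficient necessarily occurs at an even order. Applying \autoref{tm:neveda} with exponent $\alpha=k$ (here $P(x)=x-f(x)$ with $f(x)=-\alpha_kx^{k}+o(x^{k})\simeq x^{k}$, increasing near $0$) gives $q_n\simeq n^{-1/(k-1)}$, hence $q_n-q_{n+1}\simeq q_n^{k}$ and $q_n^{2}-q_{n+1}^{2}\simeq q_n^{k+1}$; the same asymptotics hold for every such spiral, which accounts for the uniformity (``any'') in the statement.

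Next, describe $\Gamma$ near the origin. Between $(q_n,0)$ and $(q_{n+1},0)$ the orbit passes once through $\{y>0\}$ and once through $\{y<0\}$, and across each quadratic tangency it traces — up to a fixed bi–Lipschitz change of coordinates near the origin, again provided by \autoref{prop:para_trans_char} — a parabolic arc of horizontal half–width $\asymp q_n$ and vertical amplitude $\asymp q_n^{2}$; these arcs are nested and shrink to the origin, while the remaining part of $\Gamma$ lies on finitely many smooth curves (of Minkowski dimension $1$). By bi–Lipschitz invariance and finite stability of $\dimb$ it therefore suffices to compute $\dimb E$ for the union $E=\bigcup_n A_n$ of those parabolic arcs $A_n$.

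To estimate $|E_\delta|$, slice $E$ perpendicularly to the axis at horizontal coordinate $\xi$. An arc $A_n$ meets the slice only if $q_n\gtrsim\xi$, and then it sits at height $\asymp q_n^{2}-\xi^{2}$, so consecutive arcs present in the slice are, away from the very bottom of the slice, vertically $\asymp q_n^{2}-q_{n+1}^{2}\simeq q_n^{k+1}$ apart. Hence for $0<\xi<\delta^{1/(k+1)}$ the innermost $\asymp\delta^{-(k-1)/(k+1)}$ arcs in the slice merge at scale $\delta$ and their $\delta$–neighbourhood has vertical measure $\asymp\delta^{2/(k+1)}$, whereas for $\delta^{1/(k+1)}<\xi\lesssim 1$ all of the $\asymp\xi^{-(k-1)}$ arcs in the slice are $\delta$–separated and contribute vertical measure $\asymp\xi^{-(k-1)}\delta$. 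Integrating, and adding the symmetric contribution of the arcs below the axis,
\[
|E_\delta|\ \asymp\ \delta^{1/(k+1)}\delta^{2/(k+1)}\ +\ \delta\!\int_{\delta^{1/(k+1)}}^{c}\xi^{-(k-1)}\,d\xi .
\]
For even $k\geq 4$ both terms are $\asymp\delta^{3/(k+1)}$, so $\dimb\Gamma=2-\tfrac{3}{k+1}$, and since the implied bounds are two–sided, $\Gamma$ is Minkowski non–degenerate. For $k=2$ the integral is $\asymp\delta\log(1/\delta)$, whence $|E_\delta|\asymp\delta\log(1/\delta)$: thus $\dimb\Gamma=1$ but $|E_\delta|/\delta\to\infty$, so $\Gamma$ is Minkowski degenerate, as claimed.

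The crux of the argument is the slice estimate, and within it the behaviour near the \emph{endpoints} of the nested arcs, i.e.\ near the $x$–axis. Unlike the analytic (circular) case, at a given $\xi$ only finitely many arcs — about $\xi^{-(k-1)}$ of them — are present, and it is precisely this count, rather than the $\delta^{-(k-1)/(k+1)}$ count of the deep ``core'' region, that produces the ``$3$'' in $2-\tfrac{3}{k+1}$ and, when $k=2$, the logarithmic factor responsible for degeneracy. Obtaining matching upper and lower bounds, so as to certify non–degeneracy for $k\geq 4$, is the most delicate part and should be carried out by separating the core $\{|\xi|\lesssim\delta^{1/(k+1)}\}$ from the ``tubes'' $\{\delta^{1/(k+1)}\lesssim|\xi|\lesssim 1\}$ and treating the arcs in $\{y>0\}$ and $\{y<0\}$ symmetrically. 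A secondary technical point is to make rigorous the two appeals to \autoref{prop:para_trans_char}: transferring the sharp asymptotics of \autoref{tm:neveda} from the $x^{k}$–model to the genuine analytic return map, and ensuring that the parabolic shape of the arcs survives the bi–Lipschitz normalisation on a fixed neighbourhood of the origin.
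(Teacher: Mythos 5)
Your dimension and (non)degeneracy computation is essentially sound and runs parallel to the paper's: the paper first straightens the parabolic contact by the analytic change of variables $(x,y)\mapsto(x,Y(-x,x,y))$ of \autoref{ob:para}, so that the arcs become horizontal segments with endpoints on a parabola-like curve (a geometric chirp), and then applies a nucleus--tail decomposition indexed by $n$ (\autoref{prop:dim_para}), with \autoref{tm:neveda} supplying $x_n\simeq n^{-1/(k-1)}$ exactly as you use it; your vertical-slice/Fubini argument, with the split into the core $\{|\xi|\lesssim\delta^{1/(k+1)}\}$ and the tubes, is just a reindexing of that same estimate and yields the same exponents $\delta^{3/(k+1)}$ and the logarithm at $k=2$. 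Two small slips: the bi-Lipschitz normalisation you invoke should be attributed to the flow-straightening of \autoref{ob:para}, not to \autoref{prop:para_trans_char} (which concerns the transition map, not coordinates); and at a slice $0<\xi<\delta^{1/(k+1)}$ the number of \emph{merged} arcs is $\asymp\xi^{-(k-1)}$, while $\delta^{-(k-1)/(k+1)}$ counts the still-separated ones --- harmless, since only the vertical measure $\asymp\delta^{2/(k+1)}$ of the slice enters the integral.

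The genuine gap is the parity claim. The statement "$k$ is necessarily even" is part of the theorem, and you only assert that the composition of the two half-plane transitions "is constrained so that its first non-identity coefficient necessarily occurs at an even order"; that constraint is precisely what must be proved, and nothing in your slicing argument supplies it. The paper's machinery closes it in two lines: by \autoref{prop:para_trans_char} both half-plane transitions $h^{\pm}$ are analytic \emph{involutions} of the form $-x+O(x^2)$, the return map is $P=h^-\circ h^+$, and since $P(x)-x=h^-(h^+(x))-h^-(h^-(x))$ the order of $P$ equals the first index $m$ at which the Taylor coefficients of $h^+$ and $h^-$ differ; applying \autoref{lm:diff_squares} with $f=h^+$, $g=h^-$ gives
\begin{equation}
0=h^+(h^+(x))-h^-(h^-(x))=\bigl((-1)^m-1\bigr)(a_m-b_m)x^m+o(x^m),
\end{equation}
which forces $m$ (hence $k$) to be even. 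Equivalently, $h^+\circ P\circ(h^+)^{-1}=P^{-1}$, and conjugating $x+\alpha_kx^k+o(x^k)$ by a germ tangent to $-\mathrm{id}$ multiplies the leading coefficient by $(-1)^{k+1}$, again forcing $k$ even. You should add this step (it also justifies excluding $k=1$, so that your appeal to \autoref{tm:neveda} with $\alpha=k>1$ is legitimate); with it, your proposal gives the full statement.
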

    
        \begin{theorem}[Fractal properties of pseudo foci of FF and mixed type, \cite{thesis_Crnkovic}]\label{thm:FFfrac}
            For a PF/FP or an FF type pseudo focus of order $k$ of a piecewise analytic vector field, the Minkowski dimension of any spiral trajectory near the focus is $2-\frac{2}{k}$ if $k>1$, and $1$ if $k = 1$. The spiral in question is Minkowski non-degenerate if and only if $k\neq 2$.
        \end{theorem}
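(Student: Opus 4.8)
The plan is to reduce, through a piecewise analogue of the flow-sector theorem, the computation of $\dimb\Gamma$ to the fractal analysis of the orbit of the first return map on a fixed cross-section, and then to read off both the dimension and the (non-)degeneracy from \autoref{tm:neveda} together with an explicit estimate of the area of the $\delta$-neighbourhood of a standard spiral.

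First I would fix as a cross-section one of the rays bounding the half-transition maps and use \autoref{prop:para_trans_char}: the associated first return map $P$ is an analytic diffeomorphism of an interval $[0,r_0)$ fixing $0$, and the pseudo focus being of order $k$ means $P(x)=x+\alpha_kx^k+o(x^k)$ with $\alpha_k\neq 0$. Reversing time if necessary I may assume $\alpha_k<0$, so that $P$ is increasing with $P(x)<x$ on $(0,r_0)$ and the pseudo focus is attracting; the answer will not depend on the particular trajectory $\Gamma$, since two spirals near the focus differ by finitely many arcs and $\dimb$ is finitely stable. If $k=1$ then $P'(0)=1+\alpha_1\in(0,1)$, $P$ is a hyperbolic contraction, and the orbit $S(x_0)=(x_n)$ tends to $0$ geometrically. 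If $k\geq 2$ then $f(x)\vcentcolon= x-P(x)\simeq|\alpha_k|x^k$ is increasing near $0$ and satisfies the hypotheses of \autoref{tm:neveda} with $\alpha=k$, hence $x_n\simeq n^{-1/(k-1)}$, $\dimb S(x_0)=1-\tfrac1k$, and $S(x_0)$ is Minkowski non-degenerate.

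Next I would pass from the orbit back to the spiral via a piecewise flow-sector argument. Decomposing $\Gamma$ into the arcs between consecutive crossings of the section — the arcs in $\{y>0\}$ governed by $V^+$ and those in $\{y<0\}$ by $V^-$ — each such arc, together with the adjacent subsector of the section, is lipeomorphic to a standard circular annular sector with radii between $x_{n+1}$ and $x_n$: for the focus pieces this is the flow-sector theorem after a linear normalisation of the focus (with the analogous straightening in the strong-focus case, where the half-transition is a hyperbolic germ), and for the parabolic pieces it follows from the flow-box theorem near a regular point with a quadratic tangency to $\{y=0\}$. Gluing these lipeomorphisms along $\{y=0\}$ exhibits $\Gamma$ as bi-Lipschitz equivalent to a standard spiral $\widetilde\Gamma=\{(r,\theta):r=g(\theta),\ \theta\geq\theta_0\}$ with $g$ decreasing and $g(2\pi n)\simeq x_n$; since bi-Lipschitz maps preserve the Minkowski dimension and Minkowski non-degeneracy, it suffices to analyse $\widetilde\Gamma$.

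Finally I would estimate $|\widetilde\Gamma_\delta|$ by splitting it into a nucleus — a disc whose radius $\rho(\delta)$ is the radius at which consecutive turns of $\widetilde\Gamma$ become $\delta$-close, i.e.\ where $x_n-x_{n+1}\simeq\delta$ — and a tube of width $\delta$ along the remaining, well-separated part of $\widetilde\Gamma$. From $x_n\simeq n^{-1/(k-1)}$ one gets $x_n-x_{n+1}\simeq r^{k}$ at radius $r$, hence $\rho(\delta)\simeq\delta^{1/k}$ and nucleus area $\simeq\delta^{2/k}$, while the length of the tail is $\simeq\sum_{x_n>\rho(\delta)}x_n$, which is $\simeq\delta^{-(k-2)/k}$ for $k\geq 3$ and $\simeq\ln(1/\delta)$ for $k=2$, contributing $\simeq\delta^{2/k}$ and $\simeq\delta\ln(1/\delta)$ respectively. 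Consequently $|\widetilde\Gamma_\delta|\simeq\delta^{2/k}$ for $k\geq 3$, giving $\dimb\Gamma=2-\tfrac2k$ with Minkowski non-degeneracy; $|\widetilde\Gamma_\delta|\simeq\delta\ln(1/\delta)$ for $k=2$, giving $\dimb\Gamma=1$ but Minkowski degeneracy; and for $k=1$ the geometric decay makes $\widetilde\Gamma$ rectifiable, so $|\widetilde\Gamma_\delta|\simeq\delta$, $\dimb\Gamma=1$ and $\widetilde\Gamma$ is non-degenerate. The step I expect to be the main obstacle is the piecewise flow-sector construction: producing a genuinely bi-Lipschitz (not merely topological) identification of the spiral region with the standard spiral across the non-smooth switching line $\{y=0\}$, uniformly in the turn index, and in particular fitting the parabolic and strong-focus pieces into the same framework; once this is in place the $\delta$-neighbourhood bookkeeping is a routine adaptation of the analytic case, and the two-sided constants it tracks are precisely what yields the sharp (non-)degeneracy dichotomy at $k=2$.
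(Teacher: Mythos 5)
Your overall strategy (reduce to the orbit $(x_n)$ of the first return map via \autoref{tm:neveda}, then do a nucleus--tail estimate of the $\delta$-neighbourhood of a model configuration) is the same as the paper's, and for the pure FF case your argument is essentially the paper's proof of \autoref{prop:dim_focus}: each half-plane is handled by the (hyperbolic) flow-sector theorem, the half-spiral becomes a family of concentric half-circle arcs of radii $\simeq x_n$, and your bookkeeping of nucleus ($\simeq\delta^{2/k}$) and tail ($\simeq\delta^{2/k}$, resp. $\delta\ln(1/\delta)$ for $k=2$, resp. $\delta$ for $k=1$) is exactly the computation in the paper. The paper, however, never glues the two half-plane normalisations into one global bi-Lipschitz map: it computes the dimension of each half separately and invokes finite stability of $\dimb$, which sidesteps the matching problem along $\{y=0\}$ that you flag as your main obstacle.

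The genuine gap is in the mixed (FP/PF) case, and it is not the technical gluing issue you anticipate but the claim preceding it: the arcs in the parabolic half-plane are \emph{not}, uniformly in $n$, lipeomorphic to circular annular half-sectors with radii between $x_{n+1}$ and $x_n$, and the resulting half of the spiral is not bi-Lipschitz equivalent to half of a standard spiral through the $x_n$. The flow-box straightening near the quadratic tangency (the paper's \autoref{ob:para}) sends these arcs to horizontal segments of length $\simeq x_n$ sitting at heights $y_n\simeq x_n^2$, with endpoints on a parabola-like curve: a geometric chirp, whose Minkowski dimension is $2-\frac{3}{k+1}$ (\autoref{prop:dim_para}), strictly smaller than $2-\frac{2}{k}$ for $k\geq 3$. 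Since bi-Lipschitz maps preserve Minkowski dimension, the identification you propose cannot exist for $k\geq 3$; the qualitative reason is that consecutive parabolic arcs come within distance $\simeq x_n(x_n-x_{n+1})$ of each other near the bottom of their dip, not $\simeq x_n-x_{n+1}$ as concentric circles do. Your final answer survives only by coincidence: the focus half dominates, so the dimension is $2-\frac2k$ anyway. But your proof still needs a correct \emph{upper} bound for the parabolic half's contribution to $|\Gamma_\delta|$ (non-degeneracy requires the $\limsup$ over the whole spiral to be finite, and the $k=2$ degeneracy claim requires both halves to be of order $\delta\ln(1/\delta)$), so the parabolic side must be analysed on its own terms. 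The fix is the paper's route: drop the global spiral model, treat the two half-planes separately, run your circular-arc computation on the focus side, run the chirp computation (nucleus $\simeq y_{n_\delta}^{3/2}\simeq\delta^{3/(k+1)}$, tail $\simeq\delta\sum_{n<n_\delta}x_n$) on the parabolic side, and conclude by finite stability together with monotonicity of $\delta$-neighbourhood measures.
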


    \subsubsection{The reconstructibility question}

        A natural question to pose is if the fractal properties of a focus/foci of $V^+$ and/or $V^-$ can be reconstructed from the fractal properties of the pseudo focus of $V$. We call this the \emph{reconstructibility question}. An indirect answer to this question is given in the form of the following theorems.

        \begin{theorem}[Realization of FF type pseudo foci, \cite{thesis_Crnkovic}]\label{thm:FF_real}
            For any $k_1\geq  k_2\geq 0$ the following holds:
            \begin{enumerate}
                \item If $k_1 = k_2$, for any $k\in \mathbb{N}$ the following holds:
                There exist analytic vector fields $V_1$ and $V_2$, respectively with foci of order $k_1$ and $k_2$ at the origin, such that the piecewise analytic vector field $V = \mathds{1}_{\{y>0\}}V_1 + \mathds{1}_{\{y<0\}}V_2$ has a pseudo focus of order $k$ at the origin if and only if $k\geq 2k_1 + 1=2k_2 + 1$ or $k$ is even.

                \item If $k_1 > k_2$, for any $k\in \mathbb{N}$ the following holds:
                There exist analytic vector fields $V_1$ and $V_2$, respectively with foci of order $k_1$ and $k_2$ at the origin, such that the piecewise analytic vector field $V = \mathds{1}_{\{y>0\}}V_1 + \mathds{1}_{\{y<0\}}V_2$ has a pseudo focus of order $k$ at the origin if and only if $k = 2k_2 + 1$; or $k$ is even and smaller than $2k_2 + 1$.
            \end{enumerate}
        \end{theorem}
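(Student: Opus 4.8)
The plan is to pass to the germ at the origin of the relevant Poincaré maps and then reduce the statement to a purely combinatorial question about Taylor coefficients of one–dimensional maps.

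\emph{Step 1 (reduction to half–transition maps).} By \autoref{prop:para_trans_char} the first return map of $V$ is analytic and factors as $P=\Delta^-_2\circ\Delta^+_1$, where $\Delta^+_1$ is the transition map of $V_1$ from the positive to the negative $x$–semiaxis through $\{y>0\}$ and $\Delta^-_2$ that of $V_2$ from the negative to the positive $x$–semiaxis through $\{y<0\}$, both read in the distance–to–origin coordinate. By definition the order of the pseudo focus is the order of $P-\mathrm{id}$ at $0$ (and $1$ if $P'(0)\ne1$), while the hypothesis that $V_i$ has a focus of order $k_i$ is a statement about the \emph{full} return map $Q_i=\Delta^-_i\circ\Delta^+_i$, namely $Q_i(x)=x+c_ix^{2k_i+1}+o(x^{2k_i+1})$ with $c_i\ne0$ for $k_i\ge1$, or $Q_i'(0)\ne1$ for $k_i=0$. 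So the theorem becomes: which orders can $\Delta^-_2\circ\Delta^+_1$ have, given those constraints on $\Delta^-_1\circ\Delta^+_1$ and $\Delta^-_2\circ\Delta^+_2$?

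\emph{Step 2 (rigidity and flexibility of a half–transition map — the key lemma).} Write an analytic focus in polar form $\frac{dr}{d\theta}=R(r,\theta)=\sum_{m\ge2}r^m A_m(\theta)$ near $r=0$. A short harmonic computation shows that $A_m$ carries only odd harmonics when $m$ is even and only even harmonics when $m$ is odd; hence for $m$ even $\int_0^{2\pi}A_m=0$ while $\int_0^\pi A_m$ is a free parameter, and for $m$ odd $\int_0^\pi A_m=\tfrac12\int_0^{2\pi}A_m$ (likewise for $\int_\pi^{2\pi}$). Propagating this through the Picard expansion of $\Delta^\pm$ yields: the even Taylor coefficients $h^\pm_{2j}$ of $\Delta^\pm$ are free, whereas the odd ones obey universal relations $h^\pm_{2j+1}=U_j(h^\pm_2,\dots,h^\pm_{2j})+\tfrac12 L_j$, where $L_j$ is the $j$–th Lyapunov quantity of the field and $U_j$ does not depend on the field; in particular $h^\pm_3=(h^\pm_2)^2+\tfrac12 L_1$. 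Consequently, for a focus of order $\kappa\ge1$, $\mathrm{ord}(\Delta^\pm-\mathrm{id})\in\{2,4,\dots,2\kappa\}\cup\{2\kappa+1\}$, and for a strong focus $\Delta^\pm(x)=\lambda^\pm x+\cdots$ with $\lambda^\pm\ne1$. I also record the realization counterpart: every array of these free parameters and every admissible value $L_\kappa\ne0$ is attained by a genuine analytic $V$ with a focus of order exactly $\kappa$ — built by prescribing $R$ separately on $[0,\pi]$ and $[\pi,2\pi]$ with the correct harmonics, matching the two–sided jets at $\theta=0,\pi$ and interpolating; the only delicate point is landing on the prescribed nonzero value of the controlling Lyapunov quantity.

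\emph{Step 3 (composed quantities and case analysis).} Substituting Step 2 into $P=\Delta^-_2\circ\Delta^+_1$, $P=\mathrm{id}+\sum p_n x^n$ when $P'(0)=1$: $P'(0)=\lambda^+_1\lambda^-_2$, equal to $1$ exactly when both foci are weak; each even $p_{2j}$ is a free parameter; and $p_{2j+1}=\widetilde U_j(p_2,\dots,p_{2j})+c_j\bigl(L^{(1)}_j+L^{(2)}_j\bigr)$ modulo $L^{(\cdot)}_1,\dots,L^{(\cdot)}_{j-1}$ with $c_j\ne0$ — most importantly $p_3=p_2^2+\tfrac12\bigl(L^{(1)}_1+L^{(2)}_1\bigr)$. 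Feed in $L^{(m)}_i=0$ for $i<k_m$, $L^{(m)}_{k_m}\ne0$, $L^{(m)}_i$ free for $i>k_m$. If $k_1=k_2$, the sums $L^{(1)}_j+L^{(2)}_j$ can be killed for $j<k_1$ and made nonzero for $j=k_1$, so $p_2=\dots=p_{2k_1}=0$ is attainable, $p_{2k_1+1}\ne0$ follows automatically, higher Lyapunov levels remain free, hence every order $\ge2k_1+1$ (either parity) and — by freeness of the even $p$'s — every even order occurs, while no odd order $<2k_1+1$ does since $p_2=\dots=p_{2j}=0$ forces $p_{2j+1}=0$ for $j<k_1$. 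If $k_1>k_2$, then $L^{(1)}_{k_2}+L^{(2)}_{k_2}$ is forced nonzero (its first summand vanishes, the second does not), so after $p_2=\dots=p_{2k_2}=0$ one gets $p_{2k_2+1}\ne0$ and no higher order is possible; the even orders $\le2k_2$ occur by freeness, $2k_2+1$ occurs, and no odd order $<2k_2+1$ occurs for the same inductive reason. These are precisely the two claimed lists, and the $k_i=0$ endpoints are absorbed through $P'(0)=\lambda^+_1\lambda^-_2$.

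\emph{The main obstacle.} The crux is Step 2: identifying the universal relations obeyed by the Taylor coefficients of a half–transition map of an analytic focus, and, dually, proving that every coefficient array compatible with them is realized by an honest analytic field of prescribed focus order. This is exactly where the asymmetry between $k_1=k_2$ and $k_1>k_2$ is born — a higher–order focus has a \emph{less} flexible half–map, since more of its odd coefficients are pinned to the even ones by the vanishing of the low Lyapunov quantities — and it is what produces the ``if and only if''. Everything else is triangular bookkeeping.
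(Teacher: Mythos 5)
Your overall architecture (reduce to one-dimensional half-maps, classify their Taylor coefficients, then do triangular bookkeeping on the composition) parallels the paper's, and your first-level relation $p_3=p_2^2+\tfrac12\bigl(L^{(1)}_1+L^{(2)}_1\bigr)$ is indeed correct; but the load-bearing Step 2 is not proved, and the sketch you give for its realization half fails. Prescribing $R(r,\theta)=\sum_m r^mA_m(\theta)$ separately on $[0,\pi]$ and $[\pi,2\pi]$, matching two-sided jets at $\theta=0,\pi$ and ``interpolating'' cannot produce an analytic vector field: an analytic, $2\pi$-periodic right-hand side -- let alone one arising from an analytic planar field, where each $A_m$ is a trigonometric polynomial of constrained degree and parity -- is determined on the whole circle by its restriction to either half-arc, and jet-matching only yields smoothness. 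Since the theorem is about analytic fields, the ``only delicate point'' you flag (hitting a prescribed nonzero Lyapunov value) is not the actual difficulty. The rigidity half of Step 2 (the universal relations $h^\pm_{2j+1}=U_j+\tfrac12L_j$ for \emph{all} $j$, with field-independent $U_j$ and nonzero coupling constants, and their propagation to $p_{2j+1}$) is likewise asserted rather than derived; it is essentially the content of \autoref{lm:formal_para_rev}, and checking it at $j=1$ does not support the inductive use in Step 3. The paper gets both halves much more economically: the full return map of each field is the square $h\circ h$ of a single analytic semi-monodromy germ, \autoref{lm:formal_para_rev} gives the parity rigidity, \autoref{lm:diff_squares} converts it into a two-line obstruction for the composed map, and \autoref{prop:focus_semi_real} together with conjugation of the transversal coordinate (push-forward by $\Phi(x,y)=(\varphi(x),y)$, e.g.\ $\varphi(x)=x-x^k$) supplies every needed realization from explicit rotationally symmetric polynomial fields -- no prescription of harmonics is required.

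A second, smaller but genuine error: your claim that $P'(0)=\lambda^+_1\lambda^-_2$ equals $1$ \emph{exactly} when both foci are weak is false -- two hyperbolic foci with reciprocal multipliers give $P'(0)=1$ -- and this is not a side issue, because the sub-case $k_1=k_2=0$ of the theorem requires precisely such configurations to realize every order $k\geq 1$; ``absorbing the $k_i=0$ endpoints through $P'(0)$'' as you state it therefore breaks that sub-case. Also note that within a single field the even coefficients of $\Delta^+$ and $\Delta^-$ are not independently free (both half-maps are restrictions of one analytic germ, so these coefficients are opposite); this happens not to damage Step 3, since you only use one half-map per field, but it shows the ``free parameter'' description of the constraint set in Step 2 is not accurate as stated.
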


        \begin{theorem}[Realization of mixed type pseudo foci, \cite{thesis_Crnkovic}]\label{thm:mixed_real}
            For any $k\in \mathbb{N}_0$, $n\in \mathbb{N}$, the following holds: there exist analytic vector fields $V_1$ with a focus of order $k$ at the origin and $V_2$ with a parabolic contact or center at the origin such that the piecewise analytic vector field $V = \mathds{1}_{\{y>0\}}V_1 + \mathds{1}_{\{y<0\}}V_2$ has a pseudo focus of order $n$ at the origin if and only if $n = 2k+1$; or $n$ is even and smaller than $2k+1$.
        \end{theorem}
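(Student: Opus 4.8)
The plan is to reduce the theorem to a one-dimensional statement about composing the two half-transition maps attached to $V_1$ and $V_2$, and then to read off the admissible orders from the structure of those maps — the same structure already exploited in the proofs of \autoref{thm:PPfrac}, \autoref{thm:FFfrac} and \autoref{thm:FF_real}. First I would fix notation: assume the flow of $V=\mathds 1_{\{y>0\}}V_1+\mathds 1_{\{y<0\}}V_2$ turns counterclockwise, and parametrise the positive and negative $x$-semiaxes by distance $x>0$ to the origin. Let $\Delta_1$ be the transition map of $V_1$ carrying $x$ on the positive semiaxis to the point $\Delta_1(x)$ on the negative semiaxis reached by following $V_1$ through the upper half plane, and let $\Delta_2$ be the transition map of $V_2$ through the lower half plane, from the negative back to the positive semiaxis. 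By \autoref{prop:para_trans_char} (for the parabolic/centre side) and the analyticity of focus transition maps, $\Delta_1$ and $\Delta_2$ are germs of analytic diffeomorphisms fixing $0$; the first return map of the pseudo focus is $P=\Delta_2\circ\Delta_1$, and the pseudo focus has order $n$ precisely when the order of $P(x)-x$ is $n$, with the convention that order $1$ means $P'(0)\neq1$.

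The argument then hinges on two structural facts. On the \emph{focus side}: if $V_1$ has a focus of order $0$ then $\Delta_1'(0)\neq1$ and this derivative is arbitrary in $(0,\infty)\setminus\{1\}$; if $V_1$ has a focus of order $k\ge1$ then $\Delta_1'(0)=1$, the first non-zero Taylor coefficient of $\Delta_1(x)-x$ occurs at an index in $\{2,4,\dots,2k\}\cup\{2k+1\}$, and every such index is attainable — in particular $2k+1$, realised by the polar model of \autoref{tm:focus_dim}. The decisive point is the \emph{parity gap}: for a focus of order $k$ the coefficients of $\Delta_1(x)-x$ at the odd indices $3,5,\dots,2k-1$ lie in the ideal generated by the coefficients of lower index, so that they are forced to vanish once the even coefficients below them do (this is the half-transition incarnation of the Bautin/focal-value description of focus order), while the coefficient at $2k+1$ does not. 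On the \emph{parabolic/centre side}: a centre gives $\Delta_2=\mathrm{id}$; a parabolic contact gives $\Delta_2'(0)=1$ with $\Delta_2(x)-x$ either vanishing identically or having its first non-zero coefficient at an \emph{even} index, and its odd-index coefficients again slaved to the lower ones — the one-sided version of the PP analysis behind \autoref{thm:PPfrac} — and every such germ is realised by an admissible $V_2$.

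Granting this, the ``if'' direction is a short computation. For $n=2k+1$, take $V_1$ a focus of order $k$ with $\Delta_1(x)=x+cx^{2k+1}+o(x^{2k+1})$, $c\neq0$, and $V_2$ a centre, so that $P=\Delta_1$ has order $2k+1$; for $k=0$ this reads $n=1$, and any admissible $V_2$ works since $P'(0)=\Delta_1'(0)\neq1$. For $n$ even with $2\le n\le2k$, keep the same $V_1$ and pick $V_2$ with $\Delta_2(x)=x+\beta x^{n}+o(x^{n})$, $\beta\neq0$; since $n<2k+1$ the $n$-th coefficient of $\Delta_1-\mathrm{id}$ vanishes and $P(x)=x+\beta x^{n}+o(x^{n})$. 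For ``only if'', take arbitrary $V_1$ (focus of order $k$) and $V_2$ (parabolic contact or centre) with $P\neq\mathrm{id}$: if $k=0$ then $P'(0)=\Delta_1'(0)\Delta_2'(0)=\Delta_1'(0)\neq1$ (as $\Delta_2'(0)=1$ in both sub-cases), so the order is $1=2k+1$; if $k\ge1$ then $P'(0)=1$, and I would show the order of $P(x)-x$ lies in $\{2,4,\dots,2k\}\cup\{2k+1\}$ by transporting both parity gaps through the composition — since $\Delta_2$ only superposes even-order (or no) perturbations whose odd-index coefficients are slaved to the lower ones, while $\Delta_1-\mathrm{id}$ obeys the order-$k$ gap, a vanishing of the coefficients of $P(x)-x$ at indices $2,\dots,2j$ for some $j<k$ forces the coefficient at $2j+1$ to vanish too; together with the first non-zero coefficient of $\Delta_1-\mathrm{id}$ sitting at index $\le2k+1$, this pins the order to $\{2,4,\dots,2k,2k+1\}$. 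Conceptually, a parabolic contact behaves through its transition map like a focus of arbitrarily large order, so the mixed case mirrors the $k_1>k_2$ case of \autoref{thm:FF_real}.

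The main obstacle is exactly this structural input and its behaviour under composition: establishing the parity gap for the half-transition map of an analytic focus of order $k$, the even-order/slaving structure for the half-transition map of a parabolic contact, and their compatibility under $\Delta_2\circ\Delta_1$ — equivalently, proving directly that no pairing of an order-$k$ focus with a parabolic contact or centre can realise an odd pseudo-focal order strictly between $1$ and $2k+1$, or an even one exceeding $2k+1$. This is a genuine analyticity phenomenon (it fails for formal or merely smooth data), and I expect it to come out of the same focal-value bookkeeping, now carried on the half-transition maps, that powers \autoref{thm:FFfrac} and \autoref{thm:FF_real}; organising that bookkeeping is the real work.
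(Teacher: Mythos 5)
Your reduction to the composition of the two one-dimensional half-transition maps, and your ``if'' constructions (centre with trivial transition for $n=2k+1$; an involution-type perturbation of order $n$ on the parabolic side for even $n<2k+1$, realized via \autoref{prop:para_trans_char}), are essentially the paper's. The problem is that the heart of the theorem --- the ``only if'' direction --- is not actually proved: you yourself defer ``establishing the parity gap \dots and their compatibility under $\Delta_2\circ\Delta_1$'' as ``the real work''. The missing idea is the paper's \autoref{lm:diff_squares}, applied not to your orientation-preserving distance-coordinate maps but to the orientation-reversing germs themselves: the semi-monodromy $m(x)=-x+O(x^2)$ of the order-$k$ focus (so $m\circ m-\mathrm{id}$ has exact order $2k+1$) and the lower-half map $h(x)=-x+O(x^2)$, which is an \emph{involution} both for a parabolic contact and for a centre. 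If $h\circ m=\mathrm{id}+\beta x^{n}+o(x^{n})$, $\beta\neq0$, then $m$ and $h=h^{-1}$ first differ at order $n$, and comparing squares gives $m\circ m-\mathrm{id}=m\circ m-h\circ h=\bigl((-1)^{n}-1\bigr)(\text{leading difference})\,x^{n}+o(x^{n})$. For odd $n$ this term survives, so the order of $m\circ m-\mathrm{id}$ is $n$, forcing $n=2k+1$; for $n>2k+1$ the jets of $m$ and $h$ agree past order $2k+1$, hence so do those of $m\circ m$ and $h\circ h=\mathrm{id}$, contradicting the order of the focus. Your ``slaved coefficients'' heuristic does not address cancellations \emph{between} $\Delta_1$ and $\Delta_2$ in the composition, which is exactly what could push the order of $P-\mathrm{id}$ beyond $2k+1$ or produce a forbidden odd order; without an argument of the above type those exclusions remain unproven.

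Two further points. The claim ``a centre gives $\Delta_2=\mathrm{id}$'' is false in general: the semi-monodromy of a monodromic centre is an arbitrary analytic involution $-x+O(x^2)$ (for instance $x\mapsto -x/(1+x)$), not necessarily $-x$. This is harmless for your existence constructions (choose the linear centre), but in the ``only if'' direction general centres must be treated with the same involution structure as parabolic contacts --- which is how the paper handles both cases uniformly. Finally, your parenthetical that the exclusion ``fails for formal data'' is off: the paper's obstruction is a finite-jet computation and holds formally; analyticity enters only through the realization statements \autoref{prop:focus_semi_real} and \autoref{prop:para_trans_char}.
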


        \begin{remark}
            As defined, the types of pseudo foci we study consist only of weak or hyperbolic foci and parabolic contacts. However, any transition map a system with a parabolic contact exhibits can also be obtained as a semi-monodromy map in a system with a monodromic center (see \autoref{prop:focus_semi_real} and \autoref{prop:para_trans_char}). Therefore, \autoref{thm:mixed_real} is stated for systems that can also have monodromic centers instead of parabolic contacts.
        \end{remark}

    \subsection{Conventions and notations}

    We use two notions of equivalent asymptotic behaviour of functions with small arguments. Let $I\subset \mathbb{R}$ be an open interval either containing $0$, or having $0$ as one of its ends. For two functions $f,g\vcentcolon I \to \mathbb{R}$ that don't vanish except maybe at $0$, we write $f(x)\simeq g(x)$, as $x\to 0$, if there exist positive constants $m$ and $M$ such that
    \begin{equation}
        m|f(x)|\leq g(x) \leq M|f(x)|,\quad x\in I.
    \end{equation}
    Moreover, we write $f(x)\sim g(x)$, as $x\to 0$, if
    \begin{equation}
        \lim_{x\to 0} \frac{f(x)}{g(x)} = 1.
    \end{equation}

\section{First return maps}\label{sec:first-return}

    For a complete understanding of pseudo foci, one would like to characterize maps that can be realized as first return maps of such fixed points. Unfortunately, we are not able to obtain a result of such strength, but it is not hard to obtain a weaker version of the results that will be sufficient for our further fractal considerations. Let's first recall the structure of the first return maps associated to non-degenerate foci in analytic systems.

    \subsection{First return maps of non-degenerate foci}\label{ssec:fr-nondeg}

       Let the origin be a non-degenerate monodromic singularity (associated eigenvalues have non-zero imaginary part) of an analytic vector field, and let $\gamma\vcentcolon\langle -\varepsilon', \varepsilon'\rangle \to \mathbb{R}^2,\, \varepsilon' > 0$, be an analytic parametrization of an arbitrary line segment through the origin, such that $\gamma(0) = (0,0)$. Then there is an $\varepsilon \in \langle 0, \varepsilon'\rangle$, and an analytic map $h\vcentcolon \langle -\varepsilon, \varepsilon\rangle\to \langle -\varepsilon', \varepsilon'\rangle$, satisfying $h'(0) < 0$, such that $\gamma(h(t))$ is the first point where a forward trajectory through $\gamma(t)$ intersects the image of $\gamma$. Moreover, $P \vcentcolon = h\circ h$ represents the first return to $\gamma\left(\langle 0, \varepsilon'\rangle\right)$ and to $\gamma\left(\langle - \varepsilon', 0\rangle\right)$. A standard result (see for instance \cite{roussarie98}) is that either $P(t)\equiv t$, or that the associated displacement map is of the form
        \begin{equation}
            \Delta(t) \vcentcolon = P(t) - t = \alpha_{2k+1}t^{2k+1} + o(t^{2k+1}),
        \end{equation}
        where $t\in \mathbb{N}_0$ and $\alpha_{2k+1}\neq 0$. In the former case the singularity is a center, and in the latter case, we say that the singularity is a focus of \emph{order} $k$. We refer to $P$ as the \emph{monodromy map}, and to $h$ as the \emph{semi-monodromy map} associated to a non-degenerate monodromic singularity.

    \subsection{Semi-monodromy maps of non-degenerate foci}\label{ssec:sm-nondeg}

        In the study of foci in analytic systems, one is usually interested in the monodromy map. However, since in piecewise smooth systems we study, the transitions through the upper and lower half-plane define different "semi-monodromy" maps, we will restrict our attention to these half-turns. We have seen in \autoref{ssec:fr-nondeg} that the semi-monodromy associated to a non-degenerate monodromic singularity is analytic in any analytic parametrization of the transversal. A natural question to pose is "what are all the analytic germs that can be realized as such semi-mondromy maps?". Unfortunately, we don't have an answer to this question, but we obtain the following weaker version of the result. It will be sufficient for our considerations.

        \begin{proposition}\label{prop:focus_semi_real}
            Let $h(t) = -\lambda t + o(t)$, $\lambda > 0$, be a real analytic germ at $0$. Then there is an analytic vector field with a non-degenerate monodromic singularity at the origin, such that the associated semi-monodromy map $m$ is a representative of the formal class of $h$. Moreover, for an arbitrary $N\in\mathbb{N}$, such a vector field can be chosen so that $h(t) - m(t) = o(t^N)$.
        \end{proposition}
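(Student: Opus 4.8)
The plan is to treat separately the case where $h$ is a genuine analytic involution and the case where it is not, and in each case to write down an explicit polynomial vector field. Suppose first that $h\circ h\not\equiv\mathrm{id}$. Then $h$ is finitely determined: if $\lambda\neq 1$ the germ is analytically linearizable by Koenigs and its formal class is fixed by $\lambda$ alone; if $\lambda=1$ then $h\circ h$ is tangent to the identity and, being analytic and not the identity, has finite-order contact with it, so $h$ is determined up to formal conjugacy by a finite jet. Hence there is $N_0$ such that any germ sharing the $N_0$-jet of $h$ lies in its formal class, and (after replacing $N$ by $\max(N,N_0)$) it suffices to produce an analytic vector field with a non-degenerate monodromic singularity whose semi-monodromy $m$ has the same $N$-jet as $h$. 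I would look for such a field in the polar-type form $\dot x=\nu x-y+xF(y)$, $\dot y=x+\nu y+yF(y)$ with $F(y)=\sum_{k=1}^{N-1}c_k y^k$: a polynomial vector field whose linear part has eigenvalues $\nu\pm i$, so the origin is non-degenerate monodromic, and for which $x\dot y-y\dot x\equiv x^2+y^2$, i.e. $\dot\theta\equiv 1$. Parametrizing the transversal by $\gamma(t)=(t,0)$, for $t>0$ the forward orbit runs through $\{y>0\}$ while $\theta$ increases from $0$ to $\pi$, first meeting the $x$-axis again at $\gamma(-r(\pi;t))$, where (using $r^k\sin^k\theta=y^k$) the radial coordinate solves $\frac{dr}{d\theta}=\nu r+\sum_{k\ge 1}c_k r^{k+1}\sin^k\theta$, $r(0)=t$. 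Thus $h(t)=-r(\pi;t)$ on $(0,\varepsilon)$, and since $h$ is analytic across $0$ (the standard fact recalled in \autoref{ssec:fr-nondeg}), the $N$-jet of $h$ equals that of $t\mapsto-r(\pi;t)$.

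The coefficients are then fixed recursively. Writing $r(\theta;t)=\sum_{j\ge1}\rho_j(\theta)t^j$ and substituting, one gets $\rho_1=e^{\nu\theta}$, so the multiplier of $r(\pi;\cdot)$ is $e^{\nu\pi}$ and the choice $\nu=\pi^{-1}\ln\lambda$ matches $h'(0)=-\lambda$; for $j\ge 2$, $\rho_j$ solves a linear equation $\rho_j'=\nu\rho_j+c_{j-1}\sin^{j-1}\theta\,e^{j\nu\theta}+G_j(\theta)$ in which $G_j$ depends only on $c_1,\dots,c_{j-2}$, hence by variation of constants $\rho_j(\pi)=K_j\,c_{j-1}+(\text{a quantity already fixed at earlier steps})$ with $K_j=e^{\nu\pi}\int_0^\pi\sin^{j-1}s\,e^{(j-1)\nu s}\,ds>0$. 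Since $K_j\neq 0$, one solves successively for $c_1,\dots,c_{N-1}$ so that $-\rho_j(\pi)$ equals the coefficient of $t^j$ in $h$ for $j=2,\dots,N$; the resulting field then has $m(t)=h(t)+o(t^N)$, and by the choice of $N$ its semi-monodromy $m$ is formally conjugate to $h$.

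Now suppose $h\circ h\equiv\mathrm{id}$; then necessarily $h'(0)=-1$, i.e. $\lambda=1$. Put $\kappa(x):=\tfrac12\bigl(x-h(x)\bigr)$, so that $\kappa'(0)=1$ and $\kappa\circ h=-\kappa$, whence $h=\kappa^{-1}\circ(-\mathrm{id})\circ\kappa$. The analytic vector field $\dot x=-2y$, $\dot y=2\kappa(x)\kappa'(x)$ has linear part with eigenvalues $\pm 2i$ and admits $\Phi(x,y)=\kappa(x)^2+y^2$, which has a strict local minimum at the origin, as a first integral, so the origin is a center (hence a non-degenerate monodromic singularity). Its orbit through $(t,0)$, $t>0$, lies on $\{\kappa(x)^2+y^2=\kappa(t)^2\}$ and returns to the $x$-axis exactly where $\kappa(x)=-\kappa(t)$, i.e. at $\gamma\bigl(\kappa^{-1}(-\kappa(t))\bigr)=\gamma(h(t))$; hence its semi-monodromy is precisely $h$, and the condition $h-m=o(t^N)$ holds trivially.

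The only genuinely non-routine points are, first, realizing that an involution must be realized by a \emph{center} — a focus has a return map of finite-order contact with the identity, hence is finitely determined and not an involution, and this is exactly what forces the first-integral construction rather than the polar one — and, second, the positivity of the integrals $K_j$, which is what makes the recursion solvable at every order while keeping the perturbation polynomial (each $\sin^k\theta$ being a homogeneous trigonometric polynomial, $r^{k+1}\sin^k\theta$ is the honest monomial $xy^k$, resp. $y^{k+1}$, in the vector field). Everything else is bookkeeping. I expect the positivity of $K_j$ and the verification that the two families are honestly analytic to be the only places where one must be careful; the conceptual crux is the case split itself.
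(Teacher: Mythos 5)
Your argument is correct, but it realizes the germ by a different mechanism than the paper. The paper's (sketched) proof reduces $h$ to its formal normal form via \autoref{lm:formal_hyp} and \autoref{lm:formal_para_rev}, realizes that normal form by explicit rotationally symmetric model systems (a linear focus for the hyperbolic case, and a radial polynomial perturbation of the rotation realizing $-t+\alpha t^{2k+1}+\beta t^{4k+1}+o(t^{4k+1})$ in the parabolic case), and then obtains the $o(t^N)$-closeness by conjugating the vector field through $\Phi(x,y)=(\varphi(x),y)$ as in the remark following the proposition. You instead (i) match an arbitrary prescribed $N$-jet directly with a single polynomial field keeping $\dot\theta\equiv 1$, solving a triangular system for $c_1,\dots,c_{N-1}$ whose solvability rests on the positivity of $K_j=e^{\nu\pi}\int_0^\pi\sin^{j-1}s\,e^{(j-1)\nu s}\,ds$ (your computation of the radial variational equations and of $K_j$ is correct), and (ii) realize an involution \emph{exactly} by the field $\dot x=-2y$, $\dot y=2\kappa(x)\kappa'(x)$ with first integral $\kappa(x)^2+y^2$ — a construction closer in spirit to the Hamiltonian trick of \autoref{prop:para_trans_char} than to the paper's route through the analytic conjugacy of an involution to $t\mapsto -t$. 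Both routes need the same finite-determinacy input: Koenigs (or \autoref{lm:formal_hyp}) when $\lambda\neq 1$, and in the parabolic non-involution case you should make the constant $N_0$ explicit — by \autoref{lm:formal_para_pres} and \autoref{lm:formal_para_rev} one can take $N_0=4k+1$, since equal $(4k+1)$-jets of $m$ and $h$ force equal $(4k+1)$-jets, hence equal formal invariants $(k,a_{2k+1},a_{4k+1})$, of $m\circ m$ and $h\circ h$; the phrase ``finite-order contact of $h\circ h$ with the identity'' alone is only the first step of that argument. Your case split is genuinely forced, as you note: a germ $m$ lying in the formal class of an involution satisfies $m\circ m-\mathrm{id}=o(t^N)$ for every $N$, hence, being analytic, is itself an involution, so jet-matching alone cannot cover that case and your center construction closes it. What your approach buys is that it avoids both the explicit realization of the parabolic normal form and the subsequent conjugation step; what the paper's approach buys is very short closed-form model systems whose semi-monodromies are computed at a glance.
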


        \begin{remark}
            Notice that the realization of $h$ is equivalent to the realization of any representative $m$ of its analytic class, that is any $m$ such that there is a local diffeomorphism $\varphi$, analytic at $0$, satisfying $h = \varphi\circ m \circ \varphi^{-1}$. Indeed, if $m$ is the semi-monodromy of some analytic vector field $V$, then $\varphi\circ m\circ \varphi^{-1}$ is the semi-monodromy of the vector field $\Phi_* V$, where $\Phi(x,y)\vcentcolon = \left( \varphi(x), y\right)$.
        \end{remark}

        Since the determination of an analytic class is a hard problem, we work with formal classes of maps.

        \begin{definition}
            Let $f(t) = \mu t + o(t)$ and $g(t) = \nu t + O(t)$, $\mu, \nu \neq 0$, be analytic germs at $0$. We say that $f$ and $g$ are \emph{formally equivalent} if for any $N\in \mathbb{N}$, there is a local diffeomorphism $\varphi(t) = t + o(t)$, analytic at $0$, such that
            \begin{equation}
                \varphi \circ f \circ \varphi^{-1} (t) = g(t) + o(t^N).
            \end{equation}
        \end{definition}

        Unlike the usual notion of formal equivalence (see for instance \cite{abbdss08}), we restrict our conjugacy maps to be tangent to the identity. This also reduces the formal class, but makes the following calculations more manageable.

        The formal class of a germ can be known from only finitely many terms in the Taylor expansion. This is precisely stated in the form of following lemmas. Analogous statements for complex germs $(\mathbb{C}, 0)\to (\mathbb{C}, 0)$ can be found in \cite{abbdss08}.

        \begin{lemma}[Formal class of hyperbolic germs]\label{lm:formal_hyp}
            A hyperbolic germ $f(t) = \lambda t + o(t)$, $\lambda \neq 0,\pm 1$, is formally equivalent to its linearization $t\mapsto \lambda t$.
        \end{lemma}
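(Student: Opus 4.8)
The plan is to eliminate the Taylor coefficients of $f$ one degree at a time by near-identity polynomial changes of coordinates. Fix $N\in\mathbb{N}$ and write $f(t)=\lambda t+\sum_{j\ge 2}a_jt^j$. The goal is to produce $\varphi(t)=t+o(t)$, analytic at $0$, with $\varphi\circ f\circ\varphi^{-1}(t)=\lambda t+o(t^N)$, which is exactly what the (tangent-to-identity) notion of formal equivalence requires.

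The single computational step is the following. Suppose a germ has the form $g(t)=\lambda t+a_mt^m+o(t^m)$, i.e. all coefficients of degree $2,\dots,m-1$ already vanish, and conjugate it by $\varphi_m(t)=t+ct^m$, so that $\varphi_m^{-1}(t)=t-ct^m+o(t^m)$. A direct expansion gives
\[
\varphi_m\circ g\circ\varphi_m^{-1}(t)=\lambda t+\bigl(a_m+c(\lambda^m-\lambda)\bigr)t^m+o(t^m),
\]
while the coefficients of degree $<m$ (namely $\lambda$ in degree $1$ and $0$ in degrees $2,\dots,m-1$) are untouched because $\varphi_m=t+O(t^m)$. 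Here is the only place the hyperbolicity hypothesis is used: since $h$, hence $f$, is a real analytic germ, $\lambda\in\mathbb{R}$, and $\lambda\neq 0,\pm 1$ forces $\lambda^{m-1}\neq 1$, i.e. $\lambda^m\neq\lambda$, for every $m\ge 2$ (the only real roots of unity are $\pm 1$). Hence $c:=a_m/(\lambda-\lambda^m)$ is well defined and annihilates the degree-$m$ coefficient.

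Applying this first with $m=2$ to kill the quadratic term, then with $m=3$ to the resulting germ to kill the (possibly newly created) cubic term, and so on up to $m=N$, after $N-1$ steps one arrives at a germ equal to $\lambda t+o(t^N)$. The conjugacy is the composition $\varphi:=\varphi_N\circ\cdots\circ\varphi_2$, which, being a finite composition of polynomial maps tangent to the identity, is itself a polynomial with $\varphi(t)=t+o(t)$, in particular analytic at $0$, with $\varphi^{-1}$ analytic at $0$ by the inverse function theorem. As $N$ was arbitrary, this proves that $f$ is formally equivalent to $t\mapsto\lambda t$.

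I expect no genuine obstacle here: the two points deserving a line of care are the verification that the real hyperbolic condition excludes all resonances $\lambda^m=\lambda$, and the remark that, since formal equivalence only asks for agreement modulo a prescribed $o(t^N)$, one never needs an infinite composition, so analyticity of $\varphi$ is automatic rather than something to be proved by a convergence estimate. I would also note in passing that this is precisely the mechanism behind the analogous complex statement cited from \cite{abbdss08}, with the resonance bookkeeping trivialized in the present real setting.
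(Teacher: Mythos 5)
Your proof is correct and is essentially the paper's argument: the paper proves this lemma by "consecutive eliminations of terms in the Taylor expansions," which is exactly your degree-by-degree conjugation by $\varphi_m(t)=t+ct^m$ using the real non-resonance $\lambda^m\neq\lambda$ for $\lambda\neq 0,\pm 1$. The only cosmetic slip is the stray reference to a germ $h$ (which belongs to Proposition 1, not this lemma); otherwise nothing to add.
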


        \begin{lemma}[Formal class of orientation preserving parabolic germs]\label{lm:formal_para_pres}
            Let $f(t) = t + \alpha_kt^k + o(t^k)$, $k\geq 2, \alpha_k\neq 0$, be a parabolic germ. The formal class of $f$ is completely determined by its $(2k-1)$-jet. More precisely, $k$ and $\alpha_k$ are invariants of the formal class, and there is exists a unique real $\beta$ such that $f$ is formally equivalent to $t\mapsto t + \alpha_kt^k + \beta t^{2k-1}$.
        \end{lemma}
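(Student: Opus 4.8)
The plan is to carry out the standard monomial‑by‑monomial normalization, but relative to the restricted group of tangent‑to‑identity germs. For $j\geq 2$ put $\varphi_j^{(c)}(t)\vcentcolon= t+ct^j$. The single structural fact I will use throughout is that every analytic germ $\varphi(t)=t+o(t)$ agrees, modulo $t^{N+1}$, with a finite composition $\varphi_N^{(c_N)}\circ\cdots\circ\varphi_2^{(c_2)}$, where the $c_j$ are read off recursively by matching jets; so everything reduces to understanding conjugation by one elementary germ $\varphi_j^{(c)}$.

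The first step is the elementary computation: if $h(t)=t+a_kt^k+a_{k+1}t^{k+1}+\cdots$ with $a_k\neq 0$ and $j\geq 2$, then expanding $\varphi_j^{(c)}\circ h\circ(\varphi_j^{(c)})^{-1}$ shows that this conjugate has exactly the same coefficients as $h$ in all degrees $\leq k+j-2$, and that its coefficient of $t^{k+j-1}$ is $a_{k+j-1}+(j-k)a_kc$ (the coefficients in degrees $>k+j-1$ are, in general, perturbed as well). The decisive feature is that the multiplier $j-k$ vanishes exactly for $j=k$: the monomial $t^{2k-1}$ is the unique one that can be neither removed by an elementary conjugation nor, starting from a germ with vanishing intermediate coefficients, disturbed by the elementary conjugations that remove the lower ones.

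Invariance of $k$ and $\alpha_k$ is then immediate, since every $\varphi_j^{(c)}$ with $j\geq 2$ fixes all coefficients of degree $\leq k$, hence so does any composition of them, hence so does $\varphi$. For existence of the normal form I would normalize degree by degree: supposing $a_{k+1}=\cdots=a_{n-1}=0$ for some $n\geq k+1$, if $n\neq 2k-1$ then conjugation by $\varphi_{n-k+1}^{(c)}$ with $c\vcentcolon= -a_n/\big((n-2k+1)\alpha_k\big)$ kills $a_n$ and preserves the already‑vanishing lower coefficients, while for $n=2k-1$ nothing can be done; iterating up to any prescribed order $N$ yields formal equivalence to $t+\alpha_kt^k+\beta t^{2k-1}$, where $\beta$ is the coefficient of $t^{2k-1}$ that survives. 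Because every conjugation used here touches only degrees $\geq k+1$, the surviving $\beta$ depends only on $\alpha_k$ and on the coefficients of $f$ in degrees $k+1,\dots,2k-1$, i.e. on the $(2k-1)$‑jet of $f$; this is the asserted dependence on the $(2k-1)$‑jet.

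Uniqueness of $\beta$ is the one step needing real care, and is where the resonance is used. Suppose $f_0=t+\alpha_kt^k+\beta t^{2k-1}+O(t^{2k})$ and $g_0=t+\alpha_kt^k+\beta't^{2k-1}+O(t^{2k})$ are formally equivalent; fix $N\geq 2k$, take the associated analytic $\varphi=t+\sum_{j\geq 2}c_jt^j$, and write $\varphi\equiv\varphi_N^{(c_N')}\circ\cdots\circ\varphi_2^{(c_2')}\pmod{t^{N+1}}$. Conjugating $f_0$ by these factors from the inside out and reading off coefficients in increasing degree: in degrees $k+1,\dots,2k-2$ the conjugate must match $g_0$, which vanishes there, and a short induction using the elementary computation forces $c_2=c_3=\cdots=c_{k-1}=0$ (in the step governing degree $k+i$ the only free quantity is $c_{i+1}$, entering with the nonzero multiplier $(i+1-k)\alpha_k$). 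Hence $\varphi=t+c_kt^k+O(t^{k+1})$; now the factor $\varphi_k^{(c_k)}$ changes the coefficient of $t^{2k-1}$ by $(k-k)\alpha_kc_k=0$, and every later factor $\varphi_j^{(c_j')}$ with $j>k$ affects only degrees $\geq 2k$, so the coefficient of $t^{2k-1}$ is left equal to $\beta$; comparing with $g_0$ gives $\beta=\beta'$. Combined with the existence step this yields the unique normal form $t+\alpha_kt^k+\beta t^{2k-1}$. The subtleties to watch are that after the first elementary conjugation the germ is no longer in normal form, so the elementary computation must be stated and used for a general $h$ as above, and that one must resist the false expectation that an elementary conjugation perturbs only a single coefficient — it perturbs all higher ones, which is exactly why the vanishing of the intermediate coefficients of $g_0$ is what pins down the $c_j$. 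Beyond this bookkeeping, no new idea is required.
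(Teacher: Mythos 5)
Your proposal is correct and follows essentially the same route the paper indicates: consecutive elimination of Taylor coefficients by tangent-to-identity conjugations, with the resonance at degree $2k-1$ (the vanishing multiplier $j-k$ for $j=k$) explaining both the surviving coefficient $\beta$ and its uniqueness. The paper omits the details, and your factor-by-factor bookkeeping with the elementary conjugations $t\mapsto t+ct^j$ is precisely the standard way to fill them in, including the uniqueness and jet-determination claims.
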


        \begin{lemma}[Formal class of orientation reversing parabolic germs]\label{lm:formal_para_rev}
            Let $f(t) = -t + o(t)$ be analytic at $0$. Then $f$ is either an involution, and thus analytically conjugate to $t\mapsto -t$, or there exist unique $k\geq 1$, $a_{2k+1}\neq 0$ and $ a_{4k+1}\in \mathbb{R}$ such that $f\circ f$ is formally equivalent to 
            \begin{equation*}
                t\mapsto t + a_{2k+1}t^{2k+1} + a_{4k+1}t^{4k+1}.
            \end{equation*}
            In the latter case, if $f(t) = -t + ct^m + o(t^m)$ for $c\neq 0$, then $m \leq 2k+1$ and $m$ is even if and only if $m \neq 2k+1$. Furthermore, in this case $f$ is formally equivalent to
            \begin{equation}\label{eq4.41}
                t\mapsto -t - \frac{a_{2k+1}}{2}t^{2l+1} + \frac{(2k+1)a_{2k+1}^2 - 4a_{4k+1}}{8} t^{4k+1}.
            \end{equation}
        \end{lemma}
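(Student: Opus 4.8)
\emph{Proof plan.} The idea is to push everything through the square $P:=f\circ f$, which is tangent to the identity since $P'(0)=f'(0)^2=1$, and to invoke the already-proved classification of orientation-preserving parabolic germs (Lemma~\ref{lm:formal_para_pres}). I would first dispose of the involution case: if $P$ is formally equivalent to the identity, then, since the order of a non-identical parabolic germ is a formal invariant by Lemma~\ref{lm:formal_para_pres}, $P$ must actually equal the identity, so $f$ is an analytic involution with $f'(0)=-1$; then $\varphi:=\tfrac12(\mathrm{id}-f)$ satisfies $\varphi'(0)=1$, hence is a local analytic diffeomorphism, and $\varphi\circ f=-\varphi$, so $f$ is analytically conjugate to $t\mapsto -t$.

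Assume from now on $P\neq\mathrm{id}$. Since $f\circ P\circ f^{-1}=f\circ f\circ f\circ f^{-1}=P$, the germ $f$ conjugates $P$ to itself; comparing leading coefficients — the order $n$ of a tangent-to-identity germ is conjugation-invariant, and its leading coefficient gets multiplied by $\varphi'(0)^{-(n-1)}$ — forces $(-1)^{-(n-1)}=1$, i.e.\ $n$ is odd, say $n=2k+1$ with $k\geq 1$. To relate the order $m$ of $f+\mathrm{id}$ to $2k+1$, I would write $f=-\mathrm{id}+R$ with $R(t)=\sum_{j\geq m}c_jt^j$, $c_m\neq 0$, $m\geq 2$; a short Taylor expansion of $R(f(t))=R(-t+R(t))$ gives
\[
  P(t)-t \;=\; -R(t)+R(-t)+O(t^{2m-1}) \;=\; -2\!\!\sum_{\substack{j\geq m\\ j\ \mathrm{odd}}}\!\! c_j\,t^j \;+\; O(t^{2m-1}).
\]
Hence $P(t)-t=O(t^m)$, so $m\leq 2k+1$; and if $m$ is odd the $j=m$ term survives and $m<2m-1$, so then $2k+1=m$. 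Therefore $m$ is even if and only if $m\neq 2k+1$. Finally, applying Lemma~\ref{lm:formal_para_pres} to $P$ produces $k$, $a_{2k+1}\neq 0$ and the unique $a_{4k+1}\in\mathbb{R}$ with $P$ formally equivalent to $t\mapsto t+a_{2k+1}t^{2k+1}+a_{4k+1}t^{4k+1}$; since a conjugacy of $f$ induces the corresponding conjugacy of $P$, these are intrinsic to $f$, which settles existence and uniqueness.

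For the normal form of $f$ itself I would normalize order by order using conjugacies tangent to the identity, in two rounds. Conjugating $f$ by $t\mapsto t+\varepsilon t^p$ with $p\geq 2$ leaves every coefficient of order $<p$ unchanged and adds $\varepsilon\bigl(1+(-1)^p\bigr)t^p$ to the coefficient of $t^p$; since $1+(-1)^p=2$ for even $p$, all even-order terms can be removed successively, so $f$ is formally equivalent to an odd germ, which I write as $-\psi$ with $\psi$ odd and tangent to the identity. As $\psi\circ\psi$ is a conjugate of $P$, hence of order $2k+1$, necessarily $\psi(t)=t+\tfrac{a_{2k+1}}{2}t^{2k+1}+\cdots$, so the odd germ is $-t-\tfrac{a_{2k+1}}{2}t^{2k+1}+\cdots$. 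In the second round, conjugating this odd germ by the (odd, identity-tangent) maps $t\mapsto t+\varepsilon t^{2q+1}$ leaves every order below $2q+2k+1$ unchanged and alters the coefficient of $t^{2q+2k+1}$ by a nonzero multiple of $\varepsilon$ precisely when $q\neq k$; as $q$ ranges over the positive integers other than $k$, the order $2q+2k+1$ ranges over all odd integers exceeding $2k+1$ except $4k+1$. Eliminating those terms successively shows $f$ is formally equivalent to $g(t)=-t-\tfrac{a_{2k+1}}{2}t^{2k+1}+\gamma\,t^{4k+1}$ for some $\gamma\in\mathbb{R}$.

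It remains to pin down $\gamma$. A direct computation gives
\[
  g(g(t)) \;=\; t+a_{2k+1}t^{2k+1}+\Bigl(\tfrac{(2k+1)a_{2k+1}^2}{4}-2\gamma\Bigr)t^{4k+1}+O(t^{4k+2}),
\]
and since $g$ is formally conjugate to $f$, the germ $g\circ g$ is formally conjugate to $P$, so by the uniqueness of the invariant $\beta$ in Lemma~\ref{lm:formal_para_pres} the coefficient of $t^{4k+1}$ above must equal $a_{4k+1}$; solving yields $\gamma=\tfrac{(2k+1)a_{2k+1}^2-4a_{4k+1}}{8}$, which is precisely the normal form \eqref{eq4.41}. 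The hard part is the order-by-order bookkeeping of the third paragraph: one must identify exactly which orders are resonant (here $2k+1$ and $4k+1$), check that every conjugacy is identity-tangent — and, in the second round, odd, so that the germ being conjugated stays odd throughout — and verify that no normalization step disturbs an already-cleared lower-order coefficient.
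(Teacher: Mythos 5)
Your plan is correct, and its skeleton is the same one the paper sketches: successive elimination of Taylor coefficients by identity-tangent conjugacies, with the classification of the square $P=f\circ f$ delegated to \autoref{lm:formal_para_pres}. The one point where you genuinely diverge is precisely the step the paper singles out as non-obvious, the oddness of the leading order of $P-\mathrm{id}$: the paper argues dynamically, from the relative position of $f(t)$ and $f^{\circ 3}(t)$ for small $t>0$, while you argue algebraically from $f\circ P\circ f^{-1}=P$ together with the rule that conjugation by a germ with derivative $-1$ multiplies the leading coefficient of an order-$n$ tangent-to-identity germ by $(-1)^{n-1}$. Both are valid; yours is purely formal (so it transfers verbatim to formal or complex germs), the paper's is a one-line sign/ordering argument. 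Your expansion giving $m\le 2k+1$ and the parity dichotomy is fine, as is the averaging conjugacy $\tfrac12(\mathrm{id}-f)$ in the involution case. One remark on the bookkeeping you flag as the hard part: the claim that conjugating the odd representative by $t\mapsto t+\varepsilon t^{2q+1}$ leaves all orders below $2q+2k+1$ untouched is not visible from the first-order-in-$\varepsilon$ computation alone (for $q<k$ there are a priori $\varepsilon^2$-contributions at orders such as $4q+1$), but it is exactly true, and the clean way to see it is already implicit in your setup: the conjugate is again odd, its square is $\varphi\circ P\circ\varphi^{-1}$, which agrees with $P$ up to order $2q+2k$, and an odd orientation-reversing germ is recovered order by order from its square (uniqueness of formal square roots); the same comparison shows the change at order $2q+2k+1$ is exactly linear in $\varepsilon$ with slope proportional to $a_{2k+1}(q-k)$, so each elimination equation is solvable. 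With that observation your two-round normalization and the determination of the $t^{4k+1}$-coefficient via $g\circ g$ and the uniqueness of $\beta$ in \autoref{lm:formal_para_pres} go through and reproduce \eqref{eq4.41}.
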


        The proofs of these lemmas are fairly straightforwardly obtained by consecutive eliminations of terms in the Taylor expansions. The only non-obvious step is the conclusion in \autoref{lm:formal_para_rev} that the first non-linear term in the Taylor expansion of $f\circ f$ has to correspond to an odd power. This becomes evident when one considers the relative position of $f(t)$ and $f^{\circ 3}(t)$, for some sufficiently small positive $t$.

        We are now ready to prove \autoref{prop:focus_semi_real}.

        \begin{proof}[Proof of \autoref{prop:focus_semi_real}]
            The proof is based on \autoref{lm:formal_hyp}, \autoref{lm:formal_para_rev}, and the following constructions. We omit the details.
            
            For $\lambda \in \mathbb{R}$, $t\mapsto -\exp\left(\lambda\right) t$ is the semi-monodromy map of the system 
            \begin{equation}
                \begin{cases}
                    \dot{x} = \lambda x - \pi y,\\
                    \dot{y} = \pi x + \lambda y.
                \end{cases}
            \end{equation}

            For $k \in \mathbb{N}$, $\alpha \neq 0$ and $\beta \in \mathbb{R}$, the semi-monodromy of the system 
            \begin{equation}
                \begin{cases}
                    \dot{x} = -\pi y - x\left(\alpha (x^2+y^2)^k + \gamma (x^2+y^2)^{2k}\right),\\
                    \dot{y} = \pi x - y\left(\alpha (x^2+y^2)^k + \gamma (x^2+y^2)^{2k}\right),
                \end{cases}
            \end{equation}
            where $\gamma = \beta + \frac{(2k+1)\alpha^2}{2}$, is of the form $t\mapsto -t + \alpha t^{2k+1} + \beta t^{4k+1} + o(t^{4k+1})$.
        \end{proof}

    \subsection{Transition maps near parabolic contacts}

        As we've mentioned already, first return maps near pseudo foci will be compositions of semi-monodromy maps of foci and/or \emph{transition} maps associated to the first return to the $x$-axis in systems with a parabolic contact. The following proposition completely characterizes such transitions.

        \begin{proposition}\label{prop:para_trans_char}
            Let 
            \begin{equation}
                \begin{cases}\label{eq:para_contact1}
                    \dot{x} = M(x,y),\\
                    \dot{y} = N(x,y)
                \end{cases}
            \end{equation}
            be a system with a parabolic contact with the line $\{y = 0\}$ at the origin. Then for any $x$ sufficiently close to $0$, the trajectory through $(x,0)$ locally intersects the $x$-axis once again at a unique point $(h(x), 0)$, such that $h$ is analytic at $0$ and an involution. Moreover, for any involution $h$ analytic at $0$, there is an analytic system with a parabolic contact with $\{y = 0\}$ at the origin such that $h$ is the associated transition map.

        \begin{figure}[h!]
            \centering
            \includegraphics[width=0.5\linewidth]{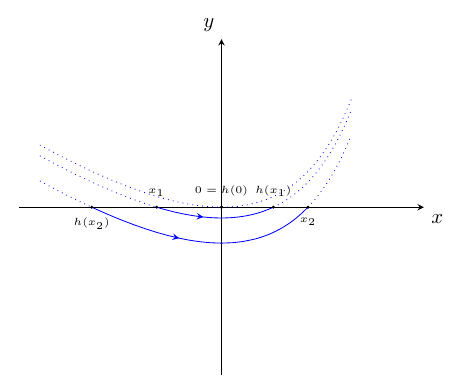}
            \caption{The transition map associated to a parabolic contact, \cite{thesis_Crnkovic}}
            \label{fig:para_trans}
        \end{figure}

        \begin{proof}
            Without loss of generality, we assume that $M\equiv 1$. For $x$ and $t$ sufficiently close to $0$, we define 
            \begin{equation}
                H(x,t) = \frac{Y(t,x,0)}{t},
            \end{equation} 
            where $Y(t, x,y)$ is the $y$-component of the $t$-flow of \eqref{eq:para_contact1}, and extend it by continuity to $\{t = 0\}$, where $H(x,0) = N(x,0)$. Since
            \begin{equation}
                \partial_t H(0,0) = \frac{\partial_x N(0,0)}{2} \neq 0,
            \end{equation}
            the implicit function theorem guarantees the existence of a unique analytic map $x\mapsto t(x)$, such that $H(x, t(x)) = 0$. Since $M\equiv 1$, $x + t(x)$ is the $x$-coordinate of the $t(x)$-flow of $(x,0)$. Therefore, $h(x) \vcentcolon = x + t(x)$ is the transition map in question. Since, 
            \begin{align*}
                0 & = Y(0, x, 0)\\
                & = Y(-t(x) + t(x), x, 0)\\
                & = Y(-t(x), X(t(x), x, 0), Y(t(x), x, 0))\\
                & = Y(-t(x), x+t(x), 0)\\
                & = Y(-t(x), h(x), 0),
            \end{align*}
            it is clear that
            \begin{equation}
                h(h(x)) = h(x) + (-t(x)) = x + t(x) - t(x) = x, 
            \end{equation}
            i.e. $h$ is an involution.

            If $h$ is an arbitrary involution analytic at $0$, then the system
            \begin{equation}
                \begin{cases}\label{eq:para_hamil}
                    \dot{x} = 1,\\
                    \dot{y} = h(x) + xh'(x),
                \end{cases}
            \end{equation}
            is Hamiltonian with the potential $F(x,y) = y - xh(x)$. Since
            \begin{align*}
                F(h(x), 0) = - h(x)h(h(x)) = - xh(x) = F(x, 0),
            \end{align*}
            it is clear that $h$ is the transition map associated to the system \eqref{eq:para_hamil}.
        \end{proof}
        \end{proposition}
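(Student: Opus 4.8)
The plan is to reduce both halves of the statement to elementary properties of the flow of a planar analytic vector field, the only structural input being that the contact is \emph{quadratic} (a simple tangency, as opposed to a higher-order one). Since $M(0,0)\neq 0$, I would first divide $V$ by $M$ on a neighbourhood of the origin; this changes only the time parametrization and not the orbits, so we may assume $M\equiv 1$. Then the $x$-coordinate of the flow $\Phi_t(x,0)$ is simply $x$ plus elapsed time, and a return of the orbit through $(x,0)$ to the $x$-axis corresponds to a zero of the $y$-component $Y(t,x,0)$ of $\Phi_t(x,0)$ as a function of $t$. Because $Y(0,x,0)\equiv 0$, the analytic division lemma (an analytic function vanishing on $\{t=0\}$ is divisible by $t$, with analytic quotient) gives $Y(t,x,0)=t\,H(x,t)$ with $H$ analytic at $(0,0)$ and $H(x,0)=N(x,0)$. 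The contact hypotheses say exactly that $H(0,0)=N(0,0)=0$ while $\partial_tH(0,0)=\tfrac12\,\partial_t^2Y(0,0,0)=\tfrac12\,\partial_xN(0,0)\neq 0$; hence the implicit function theorem produces a unique analytic $t(\cdot)$ with $t(0)=0$ and $H(x,t(x))\equiv 0$, which for $x\neq 0$ is precisely the time at which the orbit through $(x,0)$ meets the $x$-axis again. Setting $h(x):=x+t(x)$ then gives an analytic germ with $h(0)=0$ returning $(x,0)$ to $(h(x),0)$.

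To see that $h$ is an involution I would invoke reversibility of the flow: from $\Phi_{t(x)}(x,0)=(h(x),0)$ we get $\Phi_{-t(x)}(h(x),0)=(x,0)$, so in particular $Y(-t(x),h(x),0)=0$; since $(h(x),-t(x))\to(0,0)$ as $x\to 0$, the uniqueness clause of the implicit function theorem forces $t(h(x))=-t(x)$, whence $h(h(x))=h(x)+t(h(x))=h(x)-t(x)=x$.

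For the converse, let $h$ be an analytic involution at $0$; then $h(0)=0$, and unless $h\equiv\mathrm{id}$ one has $h'(0)=-1$ (a non-identical analytic involution is never tangent to the identity). I would exhibit an explicit Hamiltonian realization: the system $\dot x=1$, $\dot y=h(x)+xh'(x)=(xh(x))'$ is Hamiltonian with potential $F(x,y)=y-xh(x)$, so its orbits are the level sets of $F$; moreover $N(0,0)=h(0)=0$ and $\partial_xN(0,0)=2h'(0)\neq 0$, so the origin is a quadratic contact with $\{y=0\}$ (replace $\dot y$ by its negative if the opposite orientation is wanted). The identity that makes everything work is the involution relation $x\,h(x)=h(x)\,h(h(x))$, which gives $F(h(x),0)=-h(x)h(h(x))=-xh(x)=F(x,0)$; hence $(x,0)$ and $(h(x),0)$ lie on one orbit. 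Since $F(\cdot,0)$ vanishes to exactly second order at $0$, for small $x$ the equation $F(\xi,0)=F(x,0)$ has precisely the two roots $\xi=x$ and $\xi=h(x)$, so $(h(x),0)$ is the unique other intersection of that orbit with the $x$-axis, i.e.\ $h$ is the associated transition map.

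Conceptually there is little depth here, and I expect the main obstacle to be bookkeeping in the first half: establishing the analyticity of $H$ and, above all, verifying that the root $t(x)$ delivered by the implicit function theorem really is the first return time and not a spurious zero. This is the one place the contact being a \emph{simple} (quadratic) tangency is essential — it guarantees that $t\mapsto Y(t,x,0)$ has a clean double zero at the contact and no extraneous nearby roots, so that $h$ is well defined and analytic.
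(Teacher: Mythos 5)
Your proposal is correct and follows essentially the same route as the paper: the reduction to $M\equiv 1$, the division $Y(t,x,0)=t\,H(x,t)$ with the implicit function theorem giving the return time $t(x)$, the involution property via $Y(-t(x),h(x),0)=0$ and uniqueness, and the converse via the Hamiltonian system $\dot x=1$, $\dot y=(xh(x))'$ with potential $F(x,y)=y-xh(x)$. The only differences are cosmetic: you spell out the uniqueness step for $t(h(x))=-t(x)$, the orientation adjustment, and the two-root count for $F(\xi,0)=F(x,0)$, which the paper leaves implicit.
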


\section{Fractal analysis of pseudo foci}\label{sec:analysis}

\subsection{Separation of halfplanes and the reduction of \autoref{thm:PPfrac} and \autoref{thm:FFfrac}}
Fractal analysis of spiral trajectories in \autoref{thm:PPfrac} and \autoref{thm:FFfrac} is done separately in the upper and lower half plane. In the half plane where a focus is present, a variation of the so-called \emph{flow-sector theorem}, \cite{zz08}, is used. In the half plane where the system is governed by the parabolic contact, we reduce the analysis to that of geometric chirp-like sets (see \cite{lr24}). Similarly as in the analytic case, these analyses allow us to study only the orbits of the first return maps.

\begin{theorem}[Hyperbolic flow-sector theorem for analytic systems, \cite{zz08, thesis_Crnkovic}]\label{tm:flow_sec}
    Let 
    \begin{equation}\label{eq4.10}
    \begin{cases}
        \dot{x} = \lambda x - y + m(x,y),\\
        \dot{y} = x + \lambda y + n(x,y),
    \end{cases}
    \end{equation}
    with $\lambda \in \mathbb{R}$ and $m, n = O(x^2+y^2)$ be an analytic system. Then for a small enough open sector $U_0\subset \mathbb{R}^2$ with a vertex at the origin, such that its opening angle is in $(0, \pi]$, and boundary consist of a part of a trajectory of \eqref{eq4.10} and of intervals on two rays emanating from the origin, there is a bi-Lipschitz radial function $\varphi\vcentcolon \overline{U_0} \to \mathbb{R}^2$ that is acting as the identity on one of the ray segments on $\partial U_0$, such that under the change of coordinates $\varphi$ the system \eqref{eq4.10} on $\overline{U_0}$ becomes orbitally the same as
    \begin{equation}\label{eq4.11}
        \begin{cases}
        \dot{x} = - y,\\
        \dot{y} = x,
    \end{cases}
    \end{equation}
    on a sector of opening angle $\theta_0$ and a vertex at the origin.
    \begin{proof}
        A slight modification of the proof of flow sector theorem, \cite[Theorem 3]{zz08}. A radial transformation of coordinates on $U_0$ eliminates $\lambda$, and we are more or less left with the same situation as in the (classical) flow-sector theorem. More details can be found in \cite{thesis_Crnkovic}.
    \end{proof}
\end{theorem}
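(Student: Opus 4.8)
The plan is to strip off the linear term $\lambda$ by a preliminary radial change of coordinates, after which one is exactly in the setting of the classical flow-sector theorem, \cite[Theorem 3]{zz08}. First I would pass to polar coordinates $x = r\cos\theta$, $y = r\sin\theta$: a direct computation gives $\dot r = \lambda r + O(r^2)$ and $\dot\theta = 1 + O(r)$, so inside $U_0$ the trajectories of \eqref{eq4.10} are the graphs $r = r(\theta)$ of solutions of the scalar analytic equation $\frac{dr}{d\theta} = \lambda r + O(r^2)$. Let $\theta_1$ denote the angular coordinate of the ray segment on which $\varphi$ is to act as the identity, and let $[\theta_1, \theta_1 + \theta_*]$, with $0 < \theta_* \le \pi$, be the angular range swept out by $U_0$.

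Next I would introduce the map $\psi$ given in polar coordinates by $(r,\theta) \mapsto (r\, e^{-\lambda(\theta - \theta_1)}, \theta)$. It carries each ray through the origin onto itself, restricts to the identity on $\{\theta = \theta_1\}$, and since the factor $c(\theta) := e^{-\lambda(\theta-\theta_1)}$ and its derivative $c'(\theta)$ are bounded on $[\theta_1,\theta_1+\theta_*]$ while $c$ stays bounded away from $0$, a short computation with the pullback of the Euclidean metric shows that $\psi$ is infinitesimally bi-Lipschitz with uniform constants; since for small $U_0$ with opening angle at most $\pi$ the region $U_0$ is quasiconvex, $\psi$ is then bi-Lipschitz on $\overline{U_0}$. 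The role of $\psi$ is that, with $s := r\,e^{-\lambda(\theta-\theta_1)}$, the $\lambda$-terms cancel: the push-forward field $\psi_* V$ on $\psi(U_0)$ has $\dot s = O(s^2)$ and $\dot\theta = 1 + O(s)$, i.e. its linear part is the pure rotation, and $\psi(U_0)$ is again a region with angular range $[\theta_1, \theta_1 + \theta_*]$ bounded by a trajectory arc of $\psi_* V$ and the same two ray segments.

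Then I would run the flow-sector construction of \cite[Theorem 3]{zz08} on $\psi_* V$ over $\psi(U_0)$, arranging the resulting radial lipeomorphism $\varphi_0$ to fix the ray $\{\theta = \theta_1\}$; this produces an orbital equivalence between $\psi_* V$ on $\psi(U_0)$ and the system $\dot x = -y$, $\dot y = x$ on a standard circular sector of some opening angle $\theta_0$. Finally I would set $\varphi := \varphi_0 \circ \psi$. As a composition of radial bi-Lipschitz maps each fixing $\{\theta = \theta_1\}$, $\varphi$ is radial, bi-Lipschitz on $\overline{U_0}$, and the identity on the prescribed ray segment, and it conjugates \eqref{eq4.10} to $\dot x = -y$, $\dot y = x$ on a circular sector, which is the claim.

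I expect the main obstacle to be checking that the flow-sector construction of \cite{zz08} really does go through for $\psi_* V$: the map $\psi$ is analytic on $U_0$ but only Lipschitz at its vertex, so $\psi_* V$ need not be $C^1$ at the origin and the $C^1$ version of the theorem cannot be quoted verbatim. The way around this is that the construction in \cite{zz08} builds $\varphi_0$ entirely out of the trajectory foliation, which on the transversal is still governed by the analytic scalar equation $\frac{ds}{d\theta} = O(s^2)$; so one re-runs that argument rather than using it as a black box. The remaining steps — in particular the uniform bi-Lipschitz bounds for $\psi$, where the hypotheses "$U_0$ small" and "opening angle $\le \pi$" enter — are routine.
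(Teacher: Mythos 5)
Your proposal is correct and follows essentially the same route as the paper: a radial change of coordinates (explicitly $(r,\theta)\mapsto(re^{-\lambda(\theta-\theta_1)},\theta)$ in your version) eliminates the $\lambda$-term, after which the classical flow-sector construction of \cite{zz08} is re-run on the transformed field. Your observation that the transformed field is only Lipschitz at the vertex, so the construction must be repeated via the trajectory foliation rather than quoting \cite[Theorem 3]{zz08} verbatim, is precisely the ``slight modification'' the paper alludes to.
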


The flow sector theorem allows us to reduce the studied configuration to the family of concentric circle arcs, which simplifies the analysis. Similarly, in the case of the parabolic contact, the following observation allows us to reduce the configuration to the family of parallel line segments with endpoints on a parabola-like curve.

\begin{observation}\label{ob:para}
    Since we are only interested in the orbits of a system with a parabolic contact, without loss of generality we may assume that it is of the form
    \begin{equation}\label{eq:para_norm}
        \begin{cases}
            \dot{x} = 1,\\
            \dot{y} = N(x,y) = \alpha x + O(y) + O(x^2),
        \end{cases}
    \end{equation}
    where $N$ is analytic at the origin and $\alpha>0$. Let $Y(t, x, y)$ denote the $y$-component of the $t$-flow of \eqref{eq:para_norm} starting at $(x,y)$. Under the change of coordinates
    \begin{equation*}
        (x,y)\mapsto (x, Y(-x, x, y)),
    \end{equation*}
    the system \eqref{eq:para_norm} trivializes to
    \begin{equation}
        \begin{cases}
            \dot{x} = 1,\\
            \dot{y} = 0,
        \end{cases}
    \end{equation}
    and the integral curves become lines $\{ y = const.\}$. The $x$-axis is mapped to the curve $\{ y = Y(-x, x, 0)$, where $x\mapsto Y(-x, x, 0) = -\frac\alpha 2 x^2 + O(x^3)$ is analytic. See \autoref{fig:para_straight}.

    \begin{figure}
        \centering
        \includegraphics[width=\linewidth]{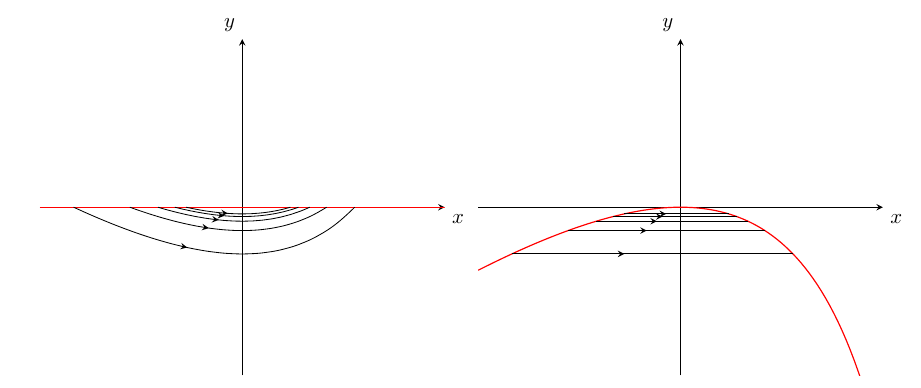}
        \caption{"Straightening" of a parabolic contact, \cite{thesis_Crnkovic}}
        \label{fig:para_straight}
    \end{figure}
\end{observation}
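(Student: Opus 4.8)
I would prove the observation in three steps — reduction to the normal form \eqref{eq:para_norm}, verification that $\Phi(x,y)\vcentcolon=(x,Y(-x,x,y))$ is an analytic change of coordinates, and the check that $\Phi$ straightens the flow and carries the $x$-axis to the stated curve. For the first step I would start from a general parabolic-contact field $\dot x=M$, $\dot y=N$ with $M(0,0)\neq 0$, $N(0,0)=0$ and $M(0,0)\,\partial_xN(0,0)>0$. Since $M$ does not vanish near the origin, dividing the field by $M$ is a time reparametrization that leaves every orbit unchanged (hence harmless for the fractal analysis) and brings the system to $\dot x=1$, $\dot y=N/M$; expanding the analytic function $N/M$ at the origin and using $N(0,0)=0$ gives $\tfrac{N}{M}(x,y)=\alpha x+O(y)+O(x^2)$ with $\alpha\vcentcolon=\partial_xN(0,0)/M(0,0)>0$, which is exactly \eqref{eq:para_norm}, the sign being consistent with the counter-clockwise convention $\operatorname{sgn}N(x,0)=\operatorname{sgn}x$.

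For the second step, note that $Y(t,x,y)$ is the $y$-component of the flow of the analytic field \eqref{eq:para_norm}, hence jointly analytic in $(t,x,y)$, so $(x,y)\mapsto Y(-x,x,y)$ is analytic; moreover $\Phi(0,0)=(0,0)$ since $Y(0,\cdot,\cdot)$ is the identity in its last argument. The Jacobian of $\Phi$ at the origin is lower triangular with diagonal entries $1$ and $\partial_yY(0,0,0)=1$, hence invertible, and the analytic inverse function theorem shows that $\Phi$ is an analytic diffeomorphism of a neighbourhood of the origin onto its image.

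The core of the argument is the straightening identity. Writing $\phi_t$ for the time-$t$ flow of \eqref{eq:para_norm}, the relation $\dot x=1$ gives $\phi_t(x_0,y_0)=(x_0+t,Y(t,x_0,y_0))$; applying $\Phi$ and then the flow identity $\phi_{-(x_0+t)}\circ\phi_t=\phi_{-x_0}$ yields
\[
\Phi\bigl(\phi_t(x_0,y_0)\bigr)=\bigl(x_0+t,\;Y(-x_0,x_0,y_0)\bigr),
\]
whose second coordinate is independent of $t$. Hence in the coordinates $(\tilde x,\tilde y)=\Phi(x,y)$ every trajectory satisfies $\tilde x=x_0+t$, $\tilde y\equiv\text{const}$, i.e. the system becomes $\dot{\tilde x}=1$, $\dot{\tilde y}=0$ and its integral curves are the horizontal lines. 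The image of the $x$-axis is $\{(x,g(x))\}$ with $g(x)\vcentcolon=Y(-x,x,0)$, which is analytic by the same reasoning and vanishes at $0$; to get its leading term I would parametrize the trajectory through $(x,0)$ by its horizontal coordinate $\xi$, so that its $y$-value $u(\xi)$ satisfies $u'=N(\xi,u)$, $u(x)=0$ and $g(x)=u(0)=-\int_0^xN(\xi,u(\xi))\,d\xi$; a Gronwall estimate gives $u(\xi)=O(x^2)$ uniformly on $[0,x]$, and substituting $N(\xi,u)=\alpha\xi+O(u)+O(\xi^2)$ gives $g(x)=-\tfrac{\alpha}{2}x^2+O(x^3)$.

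The only mildly delicate point is this last estimate: one must control the $y$-excursion of the trajectory uniformly over the horizontal interval $[0,x]$ before feeding it back into the integral. This is a routine application of Gronwall's inequality; everything else is a direct consequence of the group property of the flow and the analytic inverse function theorem, so I do not anticipate a genuine obstacle.
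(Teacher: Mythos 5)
Your proof is correct and follows essentially the route the paper intends (the observation is stated there without proof): reduce to $M\equiv 1$ by dividing by the non-vanishing $M$, use the group property of the flow to see that $(x,y)\mapsto Y(-x,x,y)$ is constant along trajectories, and check that the Jacobian of $\Phi(x,y)=(x,Y(-x,x,y))$ at the origin is invertible, so the change of coordinates is an analytic (hence bi-Lipschitz) diffeomorphism near $0$. The only stylistic remark is that the Gronwall estimate in your last step is an unnecessary detour: since the flow is analytic, one may simply Taylor-expand $Y(t,x,0)=tN(x,0)+\frac{t^2}{2}\bigl(\partial_xN+N\partial_yN\bigr)(x,0)+O(t^3)$ and substitute $t=-x$ to read off $Y(-x,x,0)=-\frac{\alpha}{2}x^2+O(x^3)$ directly.
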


These reductions gives us the following fractal results.

\begin{proposition}\label{prop:dim_focus}
    Consider the system
    \begin{equation}\label{eq4.25}
        \begin{cases}
            \dot{x} = \lambda x - y + m(x,y),\\
            \dot{y} = x + \lambda y + n(x,y),
        \end{cases}
    \end{equation}
    where $\lambda\in\R$ and $m,n = O(x^2+y^2)$ are analytic at the origin. Consider also a function $P(x) = x - cx^k + o(x^k)$, $k\in\N, c>0$, analytic at $0$, where $c<1$ if $k=1$. For $x_0>0$ sufficiently small, define $x_n \vcentcolon= P^n(x_0)$, and let $\Gamma_n$ be parts of trajectories of \eqref{eq4.25} between $(x_n, 0)$ and $\{x<0\}\times\{0\}$. Then, for the Minkowski dimension of $\cup_{n\in\N_0} \Gamma_n$ we have
    \begin{equation*}
        \dimb \cup_{n\in\N_0} \Gamma_n = \begin{cases}
            2 - \frac{2}{k},& k \geq 2,\\
            1,& k = 1.
        \end{cases}
    \end{equation*}
    Moreover, $\cup_{n\in\N_0} \Gamma_n$ is Minkowski non-degenerate if and only if $k\neq 2$.
\end{proposition}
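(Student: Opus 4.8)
The plan is to transport the whole configuration, through the hyperbolic flow-sector theorem, onto a chain of concentric circular arcs, and then to estimate directly the Lebesgue measure of the $\delta$-neighbourhood of that chain. Since the linear part of \eqref{eq4.25} rotates counter-clockwise, for $x_0>0$ sufficiently small each $\Gamma_n$ is well defined, lies in the closed upper half-plane, and is a single half-turn from $(x_n,0)$ to its first return to $\{x<0\}\times\{0\}$; the arcs $\Gamma_n$ are pairwise disjoint and nested because $0<x_{n+1}<x_n$ for all $n\geq 0$ (recall $P(x)=x-cx^k+o(x^k)<x$ on a right neighbourhood of $0$). Let $U_0$ be the open sector of opening angle $\pi$ whose boundary consists of the segment $[0,x_0]\times\{0\}$, the arc $\Gamma_0$, and the segment of $\{x<0\}\times\{0\}$ joining the origin to the far endpoint of $\Gamma_0$. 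Applying \autoref{tm:flow_sec} to $U_0$ yields a bi-Lipschitz map $\varphi\colon\overline{U_0}\to\R^2$, acting as the identity on one of the two ray segments in $\partial U_0$, which carries the orbits of \eqref{eq4.25} inside $U_0$ onto circular arcs centred at the origin of a fixed opening angle $\theta_0>0$. Hence $A_n\vcentcolon=\varphi(\Gamma_n)$ is a circular arc of angular extent $\theta_0$ centred at the origin, whose radius $r_n$ is the distance from the origin to one of the endpoints of $\Gamma_n$; both endpoints of $\Gamma_n$ lie at distance $\simeq x_n$ from the origin (the far one because the semi-monodromy of the monodromic singularity \eqref{eq4.25} is a regular germ with nonzero derivative at $0$), so $r_n\simeq x_n$, the sequence $(r_n)$ is decreasing, and $r_n-r_{n+1}\simeq x_n-x_{n+1}=f(x_n)$ with $f(x)\vcentcolon=x-P(x)=cx^k+o(x^k)$. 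Since the Minkowski dimension and Minkowski (non-)degeneracy are bi-Lipschitz invariants of subsets of $\R^2$, it remains to compute these for $A\vcentcolon=\bigcup_{n\in\N_0}A_n$.

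Next I would determine the asymptotics of $r_n$ and of the gaps $r_n-r_{n+1}$. For $k\geq 2$ the function $f$ is monotonically nondecreasing near $0$ and satisfies $0<f(x)<x$ there, so \autoref{tm:neveda} with $\alpha=k$ gives $x_n\simeq n^{-1/(k-1)}$, whence $r_n\simeq n^{-1/(k-1)}$ and $r_n-r_{n+1}\simeq x_n^{k}\simeq n^{-k/(k-1)}$. For $k=1$, $P$ is a hyperbolic contraction of multiplier $1-c\in(0,1)$, so $r_n\simeq(1-c)^n$ and $r_n-r_{n+1}\simeq(1-c)^n$.

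Now fix $\delta>0$ and let $n_\delta$ be the largest index with $r_n-r_{n+1}>2\delta$, so that the $\delta$-neighbourhoods of $A_0,\dots,A_{n_\delta}$ are pairwise disjoint, each being an annular sausage of measure $\simeq r_n\delta$ since $r_n\gg\delta$ in this range. One finds $n_\delta\simeq\delta^{-(k-1)/k}$ for $k\geq 2$ and $n_\delta\simeq\ln(1/\delta)$ for $k=1$, and in every case $r_{n_\delta}\simeq\delta^{1/k}$, so that the tail $\bigcup_{n>n_\delta}A_n$ together with its $\delta$-neighbourhood sits inside a disk of radius $\simeq r_{n_\delta}$ and contributes at most $\simeq r_{n_\delta}^2$ to $|A_\delta|$; thus $|A_\delta|\simeq\delta\sum_{n\leq n_\delta}r_n+O(r_{n_\delta}^2)$. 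For $k\geq 3$ this gives $\delta\sum_{n\leq n_\delta}r_n\simeq\delta\,n_\delta^{(k-2)/(k-1)}\simeq\delta^{2/k}\simeq r_{n_\delta}^2$, hence $|A_\delta|\simeq\delta^{2/k}$; for $k=2$ the head sum is $\simeq\delta\sum_{n\leq n_\delta}n^{-1}\simeq\delta\ln(1/\delta)$, which dominates $r_{n_\delta}^2\simeq\delta$, so $|A_\delta|\simeq\delta\ln(1/\delta)$; for $k=1$ the head sum is a convergent geometric series, so $|A_\delta|\simeq\delta$. Reading off $2-\ln|A_\delta|/\ln\delta$ yields $\dimb A=2-2/k$ for $k\geq 2$ and $\dimb A=1$ for $k=1$, while $|A_\delta|/\delta^{\,2-\dimb A}$ stays between two positive constants exactly when no logarithmic factor appears, i.e. precisely for $k\neq 2$.

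The main obstacle is the bookkeeping in the last step: fixing the cutoff $n_\delta$ so that the head sausages are genuinely disjoint while the tail still fits inside a disk of the correct radius, and keeping every estimate two-sided so that Minkowski non-degeneracy — and the fact that it fails exactly at $k=2$, where the harmonic sum $\sum n^{-1}$ produces the offending logarithm — becomes visible. The reduction through \autoref{tm:flow_sec} is comparatively routine, the only point needing care being that $\varphi$ fixing one of the bounding rays pins $r_n$ to be $\simeq x_n$; alternatively one could observe that $A$ is bi-Lipschitz equivalent to a spiral $r=\theta^{-1/(k-1)}$ and invoke the classical spiral estimates, but the direct $\delta$-neighbourhood computation above is self-contained.
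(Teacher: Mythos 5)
Your proposal is correct and follows essentially the same route as the paper: reduce via the hyperbolic flow-sector theorem to concentric circular arcs of radii comparable to $x_n$, apply \autoref{tm:neveda} to get $x_n\simeq n^{-1/(k-1)}$ (geometric decay for $k=1$), and run the nucleus--tail decomposition, with the head sum producing $\delta^{2/k}$ for $k\geq 3$, the logarithmic factor $\delta\ln(1/\delta)$ exactly at $k=2$, and $\delta$ for $k=1$. The only cosmetic difference is that the paper normalizes so the arcs are semicircles of radius exactly $x_n$, while you keep radii $r_n\simeq x_n$ and a fixed opening angle $\theta_0$, which changes nothing in the estimates.
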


    \begin{figure}[h!]
        \centering
        \includegraphics[width=0.55\linewidth]{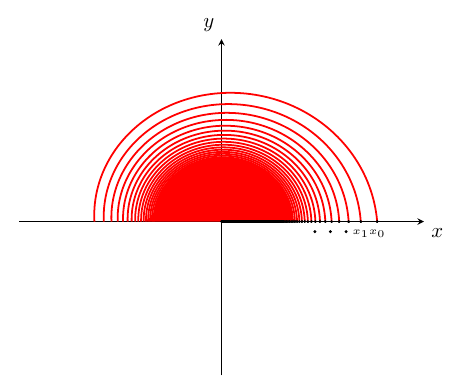}
        \caption{Family $\left(\Gamma_n\right)_{n\in\N}$ of trajectories in \autoref{prop:dim_focus}, \cite{thesis_Crnkovic}}
        \label{fig:focus_analysis}
    \end{figure}

\begin{proposition}\label{prop:dim_para}
    Assume that the system
    \begin{equation}\label{eq:gen-parab}
        \begin{cases}
            \dot{x} = M(x,y),\\
            \dot{y} = N(x,y),
        \end{cases}
    \end{equation}
    has a parabolic contact with the line $\{y = 0\}$ at the origin, and let the transition between the positive and negative part of the $x$-axis be represented by an involution $h(x) = -x + O(x^2)$ analytic at $0$ (as explained in \autoref{prop:para_trans_char}). Let $P(x) = x -cx^k + o(x^k)$, $k\in \N$, $c>0$, be analytic at $0$, where $c<1$ if $k=1$. For $x_0>0$ sufficiently small, define $x_n \vcentcolon= P^n(x_0)$, and let $\Gamma_n$ be parts of trajectories of \eqref{eq:gen-parab} between $(h(x_n), 0)$ and $(x_n, 0)$. Then
    \begin{equation*}
        \dimb \cup_{n\in\N_0} \Gamma_n = \begin{cases}
            2 - \frac{3}{k+1}, & k\geq 2,\\
            1, & k = 1.
        \end{cases}
    \end{equation*}
    Moreover, $\cup_{n\in\N_0} \Gamma_n$ is Minkowski non-degenerate if and only if $k\neq 2$.
\end{proposition}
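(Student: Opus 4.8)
The plan is to peel off the analytic coordinate changes and reduce the statement --- through the bi-Lipschitz invariance and finite stability of $\dimb$ --- to an explicit planar set, namely a nested stack of parallel segments accumulating on the origin, which is a \emph{geometric chirp}-like set in the sense of \cite{lr24}; the dimension and (non\nobreakdash-)degeneracy are then read off from the area of its $\delta$-neighbourhood. First I would use \autoref{ob:para} to replace \eqref{eq:gen-parab} by an orbitally equivalent system $\dot x = 1$, $\dot y = N(x,y)$ with $N(x,0) = \alpha x + O(x^2)$, $\alpha>0$, and then apply the analytic change of coordinates $\Phi(x,y) = (x, Y(-x,x,y))$ of that observation; $\Phi$ is a local diffeomorphism at the origin, hence bi-Lipschitz on a compact neighbourhood, and it straightens the orbits to horizontal lines and maps the $x$-axis onto the analytic curve $\{y = \psi(x)\}$, $\psi(x) = -\tfrac{\alpha}{2}x^2 + O(x^3)$. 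Since $\Gamma_n$ is the orbit arc joining $(x_n,0)$ and $(h(x_n),0)$ and $\psi(h(x_n)) = \psi(x_n)$, its image $A_n \vcentcolon= \Phi(\Gamma_n)$ is the horizontal segment $[\,h(x_n),x_n\,]\times\{\psi(x_n)\}$, of length $L_n \vcentcolon= x_n - h(x_n)\simeq x_n$ (using $h(x) = -x + O(x^2)$) and at height $H_n \vcentcolon= |\psi(x_n)|\simeq x_n^2$; since $h'(0) = -1<0$ the segments are eventually nested, $A_{n+1}\subset A_n$, and $H_n$ is eventually strictly decreasing. Finitely many exceptional segments affect neither $\dimb$ nor (non\nobreakdash-)degeneracy, so the task becomes estimating $|A_\delta|$ for $A = \bigcup_n A_n$.

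Next I would pin down the relevant asymptotics. Writing $P(x) = x - f(x)$, $f(x) = cx^k + o(x^k)$: for $k\ge2$, \autoref{tm:neveda} (with exponent $k$) gives $x_n\simeq n^{-1/(k-1)}$; for $k=1$, $P$ is a hyperbolic contraction $x_{n+1} = (1-c)x_n + o(x_n)$ with $0<1-c<1$ and an elementary estimate yields $x_n\simeq(1-c)^n$. Hence, with $G_n \vcentcolon= H_n - H_{n+1}$, one gets $L_n\simeq x_n$, $H_n\simeq x_n^2$, $G_n\simeq x_n^2 - x_{n+1}^2\simeq f(x_n)x_n\simeq x_n^{k+1}$; in particular $L_n\simeq\sqrt{H_n}$, so $A$ sits inside the parabolic region $\{0<y\le H_1,\ |x|\le C\sqrt y\}$, and these are precisely the power laws of a geometric chirp.

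For the $\delta$-neighbourhood I would fix small $\delta>0$, put $n_\delta \vcentcolon= \min\{n:G_n<\delta\}$ (so $n_\delta\simeq\delta^{-(k-1)/(k+1)}$ and $H_{n_\delta}\simeq\delta^{2/(k+1)}$ for $k\ge2$, while $n_\delta\simeq\ln(1/\delta)$ and $H_{n_\delta}\simeq\delta$ for $k=1$), and split $A = A'\cup A''$ at $n_\delta$. In the ``sparse'' part $A' = \bigcup_{n<n_\delta}A_n$ the heights are $\gtrsim\delta$-separated, so the $(A_n)_\delta$ overlap with bounded multiplicity and $|A'_\delta|\simeq\sum_{n<n_\delta}(2\delta L_n+\pi\delta^2)\simeq\delta\sum_{n<n_\delta}x_n$, which is $\simeq\delta^{3/(k+1)}$ for $k\ge3$, $\simeq\delta|\ln\delta|$ for $k=2$, and $\simeq\delta$ for $k=1$. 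In the ``dense'' part $A'' = \bigcup_{n\ge n_\delta}A_n$ one has $A''\subset R\vcentcolon=\{0<y\le H_{n_\delta},\ |x|\le C\sqrt y\}$, so $|A''_\delta|\le|R_\delta|\lesssim|R|+\delta\cdot\mathrm{perim}(R)+\pi\delta^2\simeq H_{n_\delta}^{3/2}+\delta\sqrt{H_{n_\delta}}+\delta^2$, while conversely consecutive segments with $n\ge n_\delta$ are within $\delta$ vertically, so the horizontal slices of $A''_\delta$ at heights $\simeq H_{n_\delta}$ have length $\gtrsim\sqrt{H_{n_\delta}}$ and hence $|A''_\delta|\gtrsim H_{n_\delta}^{3/2}$. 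Thus $|A''_\delta|\simeq\delta^{3/(k+1)}$ for $k\ge2$ and $\lesssim\delta^{3/2}$ for $k=1$, and combining the two parts, $|A_\delta|\simeq\delta^{3/(k+1)}$ ($k\ge3$), $|A_\delta|\simeq\delta|\ln\delta|$ ($k=2$), $|A_\delta|\simeq\delta$ ($k=1$).

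Finally, reading off the dimension with $N = 2$: for $k\ge3$, $\dimb A = 2-\tfrac{3}{k+1}$ and $|A_\delta|/\delta^{3/(k+1)}\simeq1$ so $A$ is non-degenerate; for $k=1$, $\dimb A = 1$ and $|A_\delta|/\delta\simeq1$ so $A$ is non-degenerate; for $k=2$, $\ln|A_\delta|/\ln\delta\to1$ so $\dimb A = 1 = 2-\tfrac{3}{k+1}$, but $|A_\delta|/\delta\simeq|\ln\delta|\to+\infty$ so $A$ is Minkowski degenerate --- which is the claim. I expect the main obstacle to be the quantitative version of the sparse/dense dichotomy in the third step: proving bounded overlap multiplicity for the well-separated segments (needed for the two-sided $\simeq$, not merely a one-sided bound) and, dually, that in the dense regime the $\delta$-neighbourhoods genuinely fill the parabolic cap from below. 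This is exactly what the geometric-chirp machinery of \cite{lr24} is designed for, so in practice the step amounts to checking that $A$ satisfies the hypotheses there (the power laws of the second step) and invoking their result; the borderline exponent $k=2$, where the logarithmic factor appears and precisely obstructs non-degeneracy, is the one place where the bookkeeping must be carried out carefully.
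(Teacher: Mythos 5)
Your proposal is correct and follows essentially the same route as the paper: straighten the parabolic contact via \autoref{ob:para}, use bi-Lipschitz invariance to pass to the chirp-like family of parallel segments with heights $\simeq x_n^2$, and run the nucleus--tail decomposition with the orbit asymptotics from \autoref{tm:neveda}, which yields exactly the exponents $\delta^{3/(k+1)}$, the logarithmic correction $\delta|\ln\delta|$ at $k=2$, and $\simeq\delta$ at $k=1$ that the paper obtains. The only differences are cosmetic: you spell out the upper/lower bounds for the nucleus (which the paper asserts as $\simeq y_{n_\delta}^{3/2}$) and the bounded overlap in the tail, and your claim $A_{n+1}\subset A_n$ should be read as nesting of the horizontal projections (the segments lie at different heights), a slip that does not affect the argument.
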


\begin{figure}[h!]
    \centering
    \includegraphics[width=0.55\linewidth]{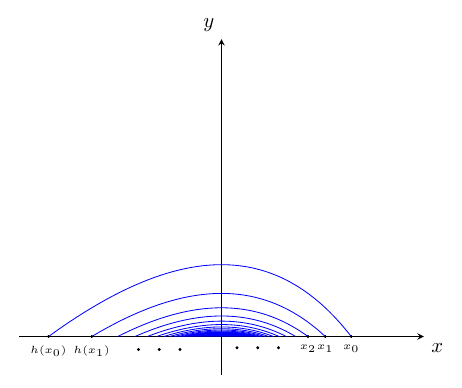}
    \caption{Family $\left(\Gamma_n\right)_{n\in\N}$ of trajectories in \autoref{prop:dim_para}}
    \label{fig:pre_st_chirp}
\end{figure}

\begin{remark}
    Notice that \autoref{thm:PPfrac} and \autoref{thm:FFfrac} are direct consequences of \autoref{prop:dim_focus}, \autoref{prop:dim_para}, and the \emph{finite stability} of the Minkowski dimension.
\end{remark}

\subsection{Proofs of \autoref{prop:dim_focus} and \autoref{prop:dim_para}}

Before the proofs of these propositions, let's demonstrate how one can calculate the Minkowski dimension of some sequences of real numbers.

\begin{example}[\cite{tricot95}]\label{exa:nuc_tail}
    For $\alpha > 0$, consider the sequence of points $x_n = \frac{1}{n^\alpha},\, n\in \N$. Let $\delta > 0$ be arbitrary. Since the distance between consecutive elements of the sequence $(x_n)_{n\in\N}$ is decreasing, there is a unique index $n_\delta$, such that $x_n - x_{n+1} \geq 2\delta,\, n < n_\delta$ and $x_n - x_{n+1} < 2\delta,\, n\geq n_\delta$. Now the $\delta$-neighbourhood of $\{x_n\vcentcolon n\in \N\}$ is a disjoint union of sets
    \begin{equation}
        N_\delta \vcentcolon= \{x_n\vcentcolon n\geq n_\delta\}_\delta,
    \end{equation}
    and
    \begin{equation}
        T_\delta \vcentcolon= \{x_n\vcentcolon n< n_\delta\}_\delta,
    \end{equation}
    which are respectively referred to as the \emph{nucleus} and the \emph{tail}. Now,
    \begin{align*}
        \left| \{x_n\vcentcolon n\in \N\}_\delta \right| & = \left| N_\delta\right| + \left| T_\delta \right|\\
        & = \left( x_{n_\delta} + 2\delta\right) + \left(n_\delta - 1\right)\cdot 2\delta\\
        & = x_{n_\delta} + 2\delta n_\delta.
    \end{align*}

    Since $x_{n_\delta} - x_{n_\delta + 1} \simeq 2\delta$, it is not hard to see that
    \begin{equation}
        n_\delta \simeq \delta^{-\frac{1}{\alpha + 1}}.
    \end{equation}
    Therefore,
    \begin{equation}
        \left| \{x_n\vcentcolon n\in \N\}_\delta \right| \simeq \delta^\frac{\alpha}{\alpha + 1},
    \end{equation}
    and consequently
    \begin{equation}
        \dimb \{x_n\vcentcolon n\in\N\} = \frac{1}{1+\alpha}.
    \end{equation}
\end{example}

\begin{figure}[h!]
    \centering
    \includegraphics[width=\linewidth]{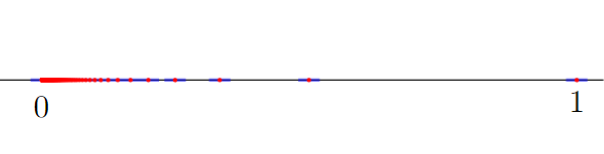}
    \caption{Visualisation of \autoref{exa:nuc_tail} for $\alpha = 1$ and $\delta = 0.02$, \cite{thesis_Crnkovic}}
    \label{fig:model_nuc_tail}
\end{figure}

\begin{remark}
    We will refer to the method from the previous example as the \emph{nucleus-tail method}. Notice that it is applicable whenever the distance between consecutive elements of the sequence in question eventually starts to decrease, and is therefore suitable for orbits of analytic maps converging to a fixed point. 
\end{remark}

\begin{proof}[Proof of \autoref{prop:dim_focus}]
    Due to \autoref{tm:flow_sec}, without loss of generality we may assume that $\Gamma_n$ are arcs of radii $x_n$ corresponding to $\theta \in[0,\pi]$. For $\delta>0$, the standard Nucleus-Tail argument for $(x_n)_n$ now gives
            \begin{equation}
                n_\delta \simeq \begin{cases}
                    \delta^\frac{1-k}{k}, & k\geq 2,\\
                    -\ln \delta, & k = 1,
                \end{cases}\quad \text{as }\delta \to 0,
            \end{equation}
            and
            \begin{equation}\label{focus-nuc}
                x_{n_\delta} \simeq \begin{cases}
                    \delta^\frac1k, & k\geq 2,\\
                    \delta, & k = 1,
                \end{cases}\quad \text{as } \delta \to 0.
            \end{equation}
            Indeed, in the case when $k\geq 2$, \autoref{tm:neveda} tells us that $x_{n}\simeq n^{-\frac{1}{k-1}}$, as $\delta \to 0$, and since $x_{n_\delta}\simeq \delta^\frac1k$, the statement follows. In the case when $k = 1$, that is when $P$ is hyperbolic, Nucleus-Tail argument again gives us
            \begin{equation*}
                x_{n_\delta}\simeq p^{-1}(\delta) \simeq \delta,\quad \text{as } \delta \to 0,
            \end{equation*}
            where $p(x)\vcentcolon= x - P(x) \sim c x$, as $x\to 0$. Since there are $\underline{c},\overline{c}\in (0,1)$ such that
            \begin{equation*}
                \underline{c} x \leq P(x) \leq \overline{c} x,\quad x\in[0, x_0],
            \end{equation*}
            we get
            \begin{equation*}
                \frac{\ln \frac{x_{n_\delta}}{x_0}}{\ln \underline{c}} \leq n_\delta \leq \frac{\ln \frac{x_{n_\delta}}{x_0}}{\ln \overline{c}},
            \end{equation*}
            so 
            \begin{equation*}
                n_\delta \simeq -\ln \delta,\quad \text{as } \delta \to 0.
            \end{equation*}
            
            Standardly, we have
            \begin{align*}
                \left| \left( \cup_{n\in\N_0} \Gamma_n\right)_\delta\right| & = \left| \left( \cup_{n\geq n_\delta} \Gamma_n\right)_\delta\right| + \sum_{n<n_\delta} \left|(\Gamma_n)_\delta\right|,
            \end{align*}
            and
            \begin{equation*}
                \frac{\pi}{2}\left( x_{n_\delta} + \delta\right)^2 \leq \left| \left( \cup_{n\geq n_\delta} \Gamma_n\right)_\delta\right| \leq \frac{\pi}{2}\left( x_{n_\delta} + \delta\right)^2 + 2\delta(x_{n_\delta}+\delta).
            \end{equation*}
            If $k = 1$, $\underline{c}^n x_0 \leq x_n \leq \overline{c}^n x_0,\, n\in\N$, and \eqref{focus-nuc} gives $\left| \left( \cup_{n\geq n_\delta} \Gamma_n\right)_\delta\right| = O(\delta^2)$. Now
            \begin{align*}
                \sum_{n<n_\delta} \left|(\Gamma_n)_\delta\right| & = \sum_{n<n_\delta} \left( 2\pi \delta x_n + \pi\delta^2\right)\\
                 & = \pi n_\delta\delta^2 + 2\pi \delta \sum_{n<n_\delta} x_n \simeq \delta,\quad \text{as }\delta \to 0.
            \end{align*}

            In the case $k \geq 2$, we have $\left| \left( \cup_{n\geq n_\delta} \Gamma_n\right)_\delta\right| \simeq (x_{n_\delta}+\delta)^2 \simeq \delta^\frac{2}{k}$, as $\delta\to 0$, and due to \cite[Theorem 1]{ezz07} we also have $x_n \simeq n^{-\frac{1}{k-1}}$, as $n\to\infty$. Again,
            \begin{align*}
                \sum_{n<n_\delta} \left|(\Gamma_n)_\delta\right| & = \sum_{n<n_\delta} \left( 2\pi \delta x_n + \pi \delta^2\right)\\
                 & = \pi n_\delta \delta^2 + 2\pi \delta \sum_{n<n_\delta} x_n.
            \end{align*}
            Since $n_\delta \delta^2 \simeq \delta^\frac{k+1}{k}$, as $\delta\to 0$, and
            \begin{align*}
                2\pi\delta \sum_{n<n_\delta} x_n  & \simeq \delta \sum_{n<n_\delta} \frac{1}{n^\frac{1}{k-1}}\\
                & \simeq \delta \int_1^{n_\delta} \frac{1}{x^\frac{1}{k-1}} dx\\
                & \simeq \begin{cases}
                    \delta n_\delta ^\frac{k-2}{k-1}, & k > 2,\\
                    \delta \ln n_\delta, & k = 2,
                \end{cases}\\
                & \simeq \begin{cases}
                    \delta^\frac{2}{k}, & k > 2,\\
                    \delta \left(-\ln \delta\right), & k = 2,
                \end{cases}\quad \text{as }\delta \to 0.
            \end{align*}
            The desired result directly follows.
\end{proof}

\begin{proof}[Proof of \autoref{prop:dim_para}]
    As in \autoref{ob:para}, without loss of generality we assume that $M \equiv 1$ in \eqref{eq:gen-parab}. Let $Y(t,x,y)$ now be the $y$-component of the flow of \eqref{eq:gen-parab}, starting at $(x,y)$. Since the change of coordinates $(x,y)\mapsto (x, Y(-x,x,y))$ is regular in a neighbourhood of the origin, i.e. bi-Lipschitz, it preserves the Minkowski dimension of sets. This change of coordinates transforms $(\Gamma_n)_n$ into parallel line segments $(S_n)_n$, with endpoints on the curve $\{y = Y(-x,x,0)\}$. Notice that $Y(-x,x,0) = \alpha x^2 + o(x^2)$, for some $\alpha \neq 0$. Now we separately consider the cases when $P$ is hyperbolic and parabolic. Let's first denote $\Tilde{Y}(x) = Y(-x, x, 0) = \alpha x^2 + o(x^2), \alpha \neq 0$.

    \begin{figure}[h!]
        \centering
        \includegraphics[width=\linewidth]{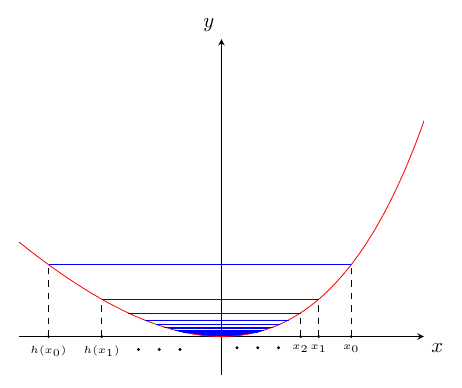}
        \caption{Family $(S_n)_n$ "after straightening" of the parabolic contact, \cite{thesis_Crnkovic}}
        \label{fig:post_st_chirp}
    \end{figure}

    \begin{figure}[h!]
        \includegraphics[width=0.5\linewidth]{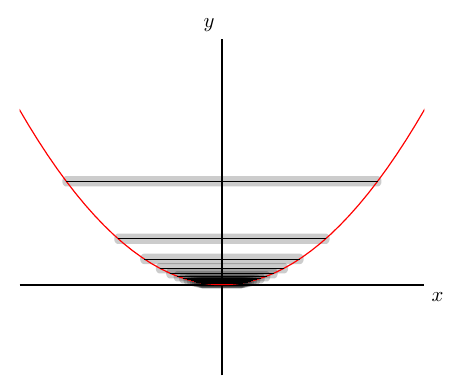}
        \includegraphics[width=0.5\linewidth]{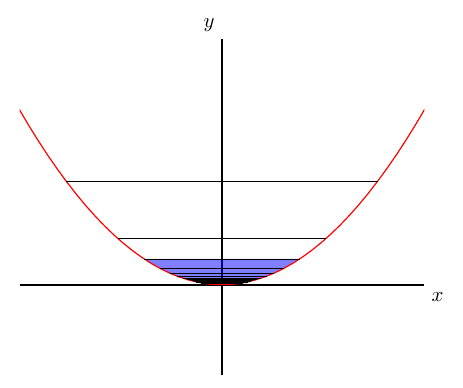}
        \caption{$\delta$-neighbourhood and the "nucleus" in the proof of \autoref{prop:dim_para}, \cite{thesis_Crnkovic}}
        \label{fig:para_analysis}
    \end{figure}
    
    \begin{enumerate}
        \item[\bf{case 1:} $k = 1$]
        Since $\Tilde{Y}'(x) = 2\alpha x + o(x)$, there is a small positive $r>0$ such that $\Tilde{Y}$ is strictly increasing on $(0,r)$. Therefore, there is an inverse $\Tilde{Y}^{-1}\vcentcolon\left(0, \Tilde{Y}(r)\right)\to \left(0, r\right)$.e now define $\Tilde{P}\vcentcolon= \Tilde{Y}\circ P\circ \Tilde{Y}^{-1}$. For the sequence $y_n\vcentcolon= \Tilde{Y}(x_n)$, we have
        \begin{equation}
            y_{n+1} = \Tilde{Y}(x_{n+1}) = \Tilde{Y}(P(x_n)) = \Tilde{Y}(P(\Tilde{Y}^{-1}(y_n))) = \Tilde{P}(y_n).
        \end{equation}
        Notice that $\Tilde{P}(y) \sim (1-c)^2y$ and $\Tilde{P}'(y) \sim (1-c)^2$, as $y\to 0$. Therefore, $\Tilde{P}$ and $\Tilde{p}(y) = y - \Tilde{P}(y)$ are strictly increasing for $r$ sufficiently small. Consequently, $(y_n - y_{n+1})_n$ is strictly decreasing. The nucleus-tail argument for the sequence $(y_n)_n$ now yields
        \begin{equation}
            y_{n_\delta} \simeq \delta, n_\delta \simeq -\ln \delta,\quad \text{as } \delta \to 0.
        \end{equation}
        Now,
        \begin{align*}
            \left| \left( \cup_{n\in\N_0} S_n\right)_\delta\right| = \left| \left( \cup_{n\geq n_\delta} S_n\right)_\delta\right| + \sum_{n<n_\delta} \left|(S_n)_\delta\right|.
        \end{align*}
        Since $\left| \left( \cup_{n\geq n_\delta} S_n\right)_\delta\right| = O(\delta^\frac32)$ and $\sum_{n<n_\delta} |(S_n)_\delta| \simeq \delta$, the desired results follows.

        \item[\bf{case 2:} $k\geq 2$]
        Notice that for $y_n \vcentcolon= \Tilde{Y}(x_n)$, we have
        \begin{align*}
            y_n - y_{n+1} & \sim 2\alpha x_n (x_n - x_{n+1})\\
            & \sim 2\alpha x_n cx_n^m\\
            & \sim 2c \sqrt{\frac{y_n}{a}}^{m+1}\\
            & \sim 2ca^{1-\frac{m+1}{2}}y_n^\frac{m+1}{2},\quad \text{as } n\to \infty.
        \end{align*}
        Nucleus-tail argument now gives us that $y_{n_\delta} \simeq \delta^\frac{2}{m+1}$, as $\delta\to 0$, and due to \autoref{tm:neveda} we know that $y_n\simeq n^{-\frac{1}{\frac{m+1}{2}-1}} \simeq n^{-\frac{2}{m-1}}$, as $n\to\infty$.
        
        Now,
        \begin{equation}
            \left|\left(\cup_{n\geq n_\delta} S_n\right)_\delta\right| \simeq y_{n_\delta}^\frac32 \simeq \delta^\frac{3}{m+1}.
        \end{equation}
        On the other hand
        \begin{align*}
            \sum_{n<n_\delta} |(S_n)_\delta| & \simeq \delta \cdot \sum_{n<n_\delta} x_n\\
            & \simeq \delta \cdot \sum_{n<n_\delta} n^{-\frac{1}{m-1}}\\
            & \simeq \begin{cases}
                \delta \cdot n_\delta^\frac{m-2}{m-1},& m>2,\\
                \delta \cdot \ln n_\delta, & m = 2,
            \end{cases}\\
            & \simeq \begin{cases}
                \delta^\frac{3}{m+1}, & m>2,\\
                -\delta \cdot \ln \delta, & m = 2,
            \end{cases} \quad \text{as }\delta \to 0.
        \end{align*}
    \end{enumerate}
   
\end{proof}

\section{The reconstructibility question}\label{sec:recon}    

    Since \autoref{thm:PPfrac} and \autoref{thm:FFfrac} provide a one-to-one correspondence between the order of (pseudo) foci and the fractal properties of the associated spiral trajectories, the realization question is reduced to the exploration of composition of various types of semi-monodromy and/or transition maps.
    
\subsection{Proofs of \autoref{thm:FF_real} and \autoref{thm:mixed_real}}
    
    Before we start the proofs of \autoref{thm:FF_real} and \autoref{thm:mixed_real}, we state and prove the following auxiliary lemma.

    \begin{lemma}\label{lm:diff_squares}
        Let $f(x) = -x + \sum_{n=2}^\infty a_nx^n$ and $g(x) = -x + \sum_{n = 2}^\infty b_n x^n$ be analytic at $0$. Then, for $k = min\{ n\in \N\vcentcolon a_n \neq b_n\}$, we have
        \begin{equation}
            f(f(x)) - g(g(x)) = \left( (-1)^k -1\right)(a_k-b_k)^k + o(x^k).
        \end{equation}
        
    \begin{proof}
        Direct calculation yields
            \begin{align*}
                f(f(x)) - g(g(x)) & = f(f(x)) - g(f(x)) + g(f(x)) - g(g(x))\\
                & = f(f(x)) - g(f(x)) + \left(-f(x) + \sum_{n=2}^\infty b_n f(x)^n\right)\\ & - \left( -g(x) + \sum_{n=2}^\infty b_n g(x)^n\right)\\
                & = (f-g)(f(x)) - (f-g)(x) + \sum_{n=2}^\infty b_n( f(x)^n - g(x)^n)\\
                & = (f-g)(f(x)) - (f-g)(x) + o(f(x)-g(x))\\
                & = (a_k-b_k)(-x)^k - (a_k-b_k)x^k + o(x^k).
            \end{align*}
    \end{proof}
    \end{lemma}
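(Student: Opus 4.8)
The plan is to avoid expanding $f\circ f$ and $g\circ g$ term by term and instead isolate the first discrepancy $f-g$ by the elementary decomposition
\[
f(f(x)) - g(g(x)) = (f-g)(f(x)) + \bigl(g(f(x)) - g(g(x))\bigr).
\]
Set $r(x) := f(x)-g(x)$. By the definition of $k$ we have $r(x) = (a_k-b_k)x^k + o(x^k)$ with $a_k-b_k\neq 0$, so $r$ vanishes to order exactly $k$. The two summands above will contribute the terms $(-1)^k(a_k-b_k)x^k$ and $-(a_k-b_k)x^k$ respectively, whose sum is the claimed $\bigl((-1)^k-1\bigr)(a_k-b_k)x^k$.

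First I would treat $(f-g)(f(x)) = r(f(x))$. Since $f(x) = -x + o(x)$, raising to the $k$-th power gives $f(x)^k = (-1)^k x^k + o(x^k)$, and $r(u) = (a_k-b_k)u^k + o(u^k)$; composing and using $|f(x)|\simeq|x|$ to convert $o(f(x)^k)$ into $o(x^k)$, one gets $r(f(x)) = (-1)^k(a_k-b_k)x^k + o(x^k)$. Next I would treat $g(f(x)) - g(g(x))$. Writing $g(u) = -u + \sum_{n\ge 2}b_n u^n$ and using the factorization $u^n - v^n = (u-v)\sum_{j=0}^{n-1}u^j v^{n-1-j}$ with $u=f(x)$, $v=g(x)$, one has $u-v = r(x) = O(x^k)$ while $u,v = O(x)$, so each nonlinear term satisfies $b_n\bigl(f(x)^n - g(x)^n\bigr) = O(x^{n-1})\cdot O(x^k) = o(x^k)$ for $n\ge 2$; analyticity of $g$ makes the corresponding tail sum absolutely convergent and of the same order. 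Hence $g(f(x)) - g(g(x)) = -\bigl(f(x)-g(x)\bigr) + o(x^k) = -(a_k-b_k)x^k + o(x^k)$. Adding the two contributions yields the lemma.

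I do not expect a genuine obstacle: the argument is a short computation once the decomposition above is written down. The only point requiring care is the error bookkeeping in the second summand, where one must use simultaneously that $f(x)-g(x)$ vanishes to order $k$ \emph{and} that $f(x),g(x)$ each vanish to order $1$, so that the nonlinear part of $g$ contributes only beyond order $x^k$; analyticity is used merely to control the infinite tail and could be replaced by a $C^k$ (or $k$-jet) hypothesis. As a sanity check, when $k$ is even the prefactor $(-1)^k-1$ vanishes, so the first discrepancy between $f\circ f$ and $g\circ g$ cannot occur at an even order — exactly the phenomenon already recorded in \autoref{lm:formal_para_rev}.
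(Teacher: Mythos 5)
Your proof is correct and follows essentially the same route as the paper: both split the difference as $(f-g)(f(x)) + \bigl(g(f(x))-g(g(x))\bigr)$, observe that the nonlinear terms of $g$ contribute only $o(x^k)$ because $f(x)-g(x)=O(x^k)$ while $f(x),g(x)=O(x)$, and read off the two contributions $(-1)^k(a_k-b_k)x^k$ and $-(a_k-b_k)x^k$. Your write-up also gives the conclusion in the correct form $\bigl((-1)^k-1\bigr)(a_k-b_k)x^k$, which fixes the typo $(a_k-b_k)^k$ in the stated equation.
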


    \begin{proof}[Proof of \autoref{thm:FF_real}]
        We treat separately cases when $k_1 = k_2$ and when $k_1>k_2$.
            
            \begin{enumerate}
            
                \item Case $k_1 = k_2$:\\
                If $k\geq 2k_1+1$, the \autoref{prop:focus_semi_real} tells us that we can construct an analytic vector field $V_1$ with a focus at the origin such that the associated semi-monodromy is $m_1(x) = -x + x^{2k_1+1}+o(x^{k})$, and a vector field $\Tilde{V}_2$ such that for the associated semi-monodromy $\tilde{m}_2$ we have $\tilde{m}_2(x) = -x + x^{2k_1+1} - x^k + o(x^k)$. The vector field $V_2\vcentcolon= - \Tilde{V}_2$ is therefore analytic and the associated semi-monodomy maps $m_2$ satisfies $m_2^{-1}(x) = \tilde{m}_2(x) = -x+x^{2k_1+1}-x^k + o(x^k)$. Now, since $m_1(x) - m_2^{-1}(x) = x^k + o(x^k)$, we also know that $m_2(m_1(x)) = x - x^k + o(x^k)$, and the first part of the claim follows.

                If $k < 2k_1+1$ is even. Let $m_1$ be as in the first case. We can conjugate $m_1$ with $\varphi(x) = x - x^k$ to get
                \begin{equation*}
                    \varphi^{-1}\circ m_1 \circ \varphi(x) = - x + 2x^k + o(x^k).
                \end{equation*}
                \autoref{prop:focus_semi_real} now tells us that we can construct an analytic vector field $V_1'$ such that the associated semi-monodromy is $m_1'(x) = -x + 2x^k + o(x^k)$. Since $m_1'(m_1(x)) = x - 2x^k + o(x^k)$, the existential part of the claim follows.

                Let now $k<2k_1+1$ be odd, and assume that there are analytic vector fields $V_1$ and $V_2$ with a (weak) focus of order $k_1/k_2$ at the origin such that for the associated semi-monodromy maps $m_1$ and $m_2$, it holds that $m_2(m_1(x)) = x - ax^{k} + o(x^{k}), a\neq 0$, which is equivalent to $m_1(x) - m_2^{-1}(x) = ax^{k} + o(x^{k})$.
                We know that $m_1(m_1(x)) - m_2^{-1}(m_2^{-1}(x)) = O(x^{2k_1+1})$, but on the other hand, due to \autoref{lm:diff_squares} we have
                \begin{equation*}
                    m_1(m_1(x)) - m_2^{-1}(m_2^{-1}(x)) = -2ax^k + o(x^k) \neq O(x^{2k_1+1}),
                \end{equation*}
                and we arrive at a contradiction.

                \item Case $k_1>k_2$:\\
                If $k>2k_2+1$, assume that there are analytic vector fields $V_1$ and $V_2$ with a (weak) focus of order $k_1$ and $k_2$ respectively at the origin such that for the associated semi-monodromy maps $m_1$ and $m_2$ it holds that $m_2(m_1(x)) = x - ax^k + o(x^k), a\neq 0$.
                Now $\{m_1\}_{2k_2+1} = \{m_2^{-1}\}_{2k_2+1}$ and necessarily $m_1(m_1(x)) = m_2^{-1}(m_2^{-1}(x)) + o(x^{2k_2+1})$, which is in contradiction with the foci of $V_1$ and $V_2$ being of different orders.

                If $k = 2k_2+1$, we again, by \autoref{prop:focus_semi_real}, construct analytic vector fields $V_1$ and $V_2$ with a focus at the origin, such that the associated semi-monodromy maps are $m_1(x) = -x + x^{2k_1+1}+o(x^2k_1+1)$ and $m_2(x) = -x+x^{2k_2+1}+o(x^{2k_1+1})$ respectively. Now, $m_2(m_1(x)) = x - x^{2k_2+1} + o(x^{2k_2+1})$ and we have achieved what we wanted.

                If $k < 2k_2 + 1$ and $k$ is even, we again, by \autoref{prop:focus_semi_real}, construct analytic vector fields $V_1$ and $V_2$ with a focus at the origin, such that the associated semi-monodromy maps are $m_1(x) = -x + x^{2k_1+1}+o(x^2k_1+1)$ and $m_2(x) = -x+x^{2k_2+1}+o(x^{2k_1+1})$ respectively. We can conjugate $m_1$ with $\varphi(x) = x - x^k$ to get
                \begin{equation*}
                    \varphi^{-1}\circ m_1\circ\varphi(x) = -x + 2x^k + o(x^k).
                \end{equation*}
                Again, \autoref{prop:focus_semi_real} tells us that we can construct an analytic vector field $V_1'$ such that the associated semi-monodromy map is $m'_1 = -x + 2x^k + o(x^k)$. Since $m_2(m'_1(x)) = x - 2x^k + o(x^k)$, we have indeed constructed the pair of vector fields with the wanted properties.

                Finally, let us assume that for and odd $k<2k_2+1$, there are analytic vector fields $V_1$ and $V_2$ with foci of order $k_1$ and $k_2$ at the origin, such that, for the associated semi-monodromy maps $m_1$ and $m_2$, it holds that $m_2(m_1(x)) = x - ax^k + o(x^k)$, $a\neq 0$, which is equivalent to $m_1(x) - m_2^{-1}(x) = a^k + o(x^k)$. We know that $m_1(m_1(x))-m_2^{-1}(m_2^{-1}(x)) = O(x^{2k_2+1})$, but on the other hand, due to \autoref{lm:diff_squares}, we have
                \begin{equation*}
                    m_1(m_1(x)) - m_2^{-1}(m_2^{-1}(x)) = -2ax^k + o(x^k) \neq O(x^{2k_2+1}),
                \end{equation*}
                and we arrive at a contradiction.
                \end{enumerate}
        \end{proof}

        \autoref{thm:mixed_real} is proven similarly.

        \begin{proof}[Proof of \autoref{thm:mixed_real}]
            Let us first show that such vector fields can only exist if $n\leq 2k+1$.
            
            To this end, assume that there exist functions $m(x) = -x + O(x^2)$ and $h(x) = -x + O(x^2)$, analytic at the origin, such that $m(m(x)) = x + \alpha x^{2k+1} + o(x^{2k+1})$, $\alpha\neq 0$ and $h(h(x)) = x$. The function $m$ represents the semi-monodromy of a focus, and $h$ represents the transition function associated to a parabolic contact, or a semi-monodromy of a center. Additionally assume that $h(m(x)) = x + \beta x^n + o(x^n)$, $\beta \neq 0$. Similarly as before,
            \begin{equation*}
                m(x) - h(x) = m(x) - h^{-1}(x) = -\alpha x^n + o(x^n).
            \end{equation*}
            Since, $n>2k+1$, we necessarily have
            \begin{equation*}
                \{h(h(x))\}_{2k+1} = \{m(m(x))\}_{2k+1} = x + \alpha x^{2k+1},
            \end{equation*}
            which is a contradiction with the assumption that $h$ is an involution. Therefore, $n$ is indeed necessarily less than or equal to $2k+1$.

            Consider now the following cases.

            \begin{enumerate}
                \item {Case $n = 2k+1$:}

                By \autoref{prop:focus_semi_real}, we know that we can construct an analytic vector field $V$ with a focus at the origin, such that the associated semi-monodromy map $m$ satisfies $m(x) = -x + x^{2k+1} + o(x^{2k+1})$. By \autoref{prop:para_trans_char}, there also exist analytic vector fields with a parabolic contact at the origin and a center at the origin, such that their transition map and semi-monodromy map respectively is $h(x) = -x$. Since $h(m(x)) = x - x^{2k+1}+o(x^{2k+1})$, we have indeed constructed a wanted pseudo focus of order $n$.

                \item {Case $n<2k+1$ and $n$ is even:}

                 By \autoref{prop:focus_semi_real}, we know that we can construct an analytic vector field $V$ with a focus at the origin, such that the associated semi-monodromy map $m$ satisfies $m(x) = -x + x^{2k+1} + o(x^{2k+1})$. For $\phi(x) = x - x^n$, we define and involution $h(x) = \phi^{-1}(-\phi(x)) = -x+2x^n+o(x^n)$. By \autoref{prop:para_trans_char}, there also exist analytic vector fields with a parabolic contact at the origin and a center at the origin, such that their transition map and semi-monodromy map is respectively is $h$. Since $h(m(x)) = x + 2x^n + o(x^n)$, the obtained pseudo focus is indeed of order $n$.

                 \item {Case $n<2k+1$ and $n$ is odd:}

                 Assume that there exist functions $m(x) = -x + O(x^2)$ and $h(x) = -x + O(x^2)$ analytic at the origin such that $m(m(x)) = x + \alpha x^{2k+1} + o(x^{2k+1})$, $\alpha\neq 0$ and $h(h(x)) = x$. The function $m$ represents the semi-monodromy of a focus, and $h$ represents the transition function associated to a parabolic contact, or a semi-monodromy of a center. Additionally assume that $h(m(x)) = x + \beta x^n + o(x^n)$, $\beta \neq 0$. Therefore, $m(x) - h^{-1}(x) = -\beta x^m$, and, due to \autoref{lm:diff_squares}, we have
                 \begin{align*}
                     m(m(x)) - x & = m(m(x)) - h^{-1}(h^{-1}(x)) = 2\beta x^n + o(x^n),
                 \end{align*}
                 so we arrive at a contradiction.
            \end{enumerate}
        \end{proof}
   
\section{Conclusions and perspectives}\label{sec:conc}

    This manuscript provides a complete fractal treatment of pseudo-foci of types introduced in \cite{cgp01}. Notice that all our calculations only depend on the first non-zero term in the asymptotic expansion of the displacement maps associated to the first return map, so similar results are expected to hold for $C^\infty$ systems, with possible small alterations of some arguments. During consultations with the author's thesis defense committee, Prof. Pavao Mardešić suggested generalizations of these results in the spirit of the work of Nakai \cite{nakai94}. The first return maps we have studied are of the form $P = h^-\circ h^+$, where $h^\pm$ are the semi-monodromy maps in the upper and lower half plane respectively, and an orbit of a point can be understood as the action of a \emph{pseudogroup} $G = \{P^n \colon n\in \N_0\}$ on a point. The work of Nakai involves the study of generalized orbits under the action of general \emph{nonsolvable} pseudogroups of holomorphic diffeomorphisms in $(\mathbb{C}, 0)$. Motivated by this one can consider orbits under the action of the pseudogroup generated by $S = \{h^-\circ h^+, h^+\circ h^-, h^+\circ h^+, h^-\circ h^-\}$. These orbits can be thought of as sets of points of intersection of a transversal and a trajectory of a piecewise analytic system where the switch between the two subsystems does not necessarily happen every time we intersect the $x$-axis. This approach can even be enriched with a stochastic element, where one might study the \emph{expected} or \emph{a.s.} fractal properties of such orbits, under certain probability distributions that determine the switch between the two regimes.

\section*{Acknowledgements}\label{sec:ack}

    The contents of this paper arose from a chapter of the authors PhD thesis that is part of a joint degree (cotutelle de these) awarded by University of Zagreb, Croatia, and Hasselt University, Belgium. The author would like to thank his PhD advisor, Professor Renato Huzak, for suggesting the fractal analysis of pseudo foci as part of their research. The author would also like to thank Professor Maja Resman for suggesting the reconstructibility question.
    
    This research was supported by Croatian Science Foundation (HRZZ) grant IP-2022-10-9820, "Global and local dynamics on surfaces".

\bibliographystyle{alpha}
\bibliography{citation}

\end{document}